\newtheorem{theorem}{Theorem}
\newtheorem{remark}{Remark}
\newtheorem{lemma}{Lemma}
\DeclareMathOperator*{\argmin}{arg\,min} 
\newcommand{\diag}{\mathop{\rm diag}\nolimits}
\newcommand{\ba}[1]{\begin{array}{#1}}
\newcommand{\ea}{\end{array}}
\begin{document}
\title{\LARGE \bf
A direct optimization algorithm for input-constrained MPC}
\author{Liang Wu$^{1}$, Richard D. Braatz$^{1}$, Fellow, IEEE%
\thanks{$^{1}$Massachusetts Institute of Technology, Cambridge, MA 02139, USA, {\tt\small \{liangwu,braatz\}@mit.edu}\\
}
}

\maketitle

\begin{abstract}
Providing an execution time certificate is a pressing requirement when deploying Model Predictive Control (MPC) in real-time embedded systems such as microcontrollers. Real-time MPC requires that its worst-case (maximum) execution time must be theoretically guaranteed to be smaller than the sampling time in closed-loop. This technical note considers input-constrained MPC problems and exploits the structure of the resulting box-constrained QPs. Then, we propose a \textit{cost-free} and \textit{data-independent} initialization strategy, which enables us, for the first time, to remove the initialization assumption of feasible full-Newton interior-point algorithms. We prove that the number of iterations of our proposed algorithm is \textit{only dimension-dependent} (\textit{data-independent}), \textit{simple-calculated}, and \textit{exact} (not \textit{worst-case}) with the value $\left\lceil\frac{\log(\frac{2n}{\epsilon})}{-2\log(\frac{\sqrt{2n}}{\sqrt{2n}+\sqrt{2}-1})}\right\rceil \!+ 1$, where $n$ denotes the problem dimension and $\epsilon$ denotes the constant stopping tolerance. These features enable our algorithm to trivially certify the execution time of nonlinear MPC (via online linearized schemes) or adaptive MPC problems. The execution-time-certified capability of our algorithm is theoretically and numerically validated through an open-loop unstable AFTI-16 example.
\end{abstract}

\begin{IEEEkeywords}
Model predictive control, execution time certificate, interior-point method, cost-free initialization strategy.
\end{IEEEkeywords}

\section{Introduction}
Model predictive control (MPC) generally requires solving an online quadratic programming (QP) problem at each sampling time in a real-time closed-loop. Deploying real-time MPC on embedded production platforms such as microcontrollers, has to meet a key requirement, called an execution time certificate, which is that the execution time of the adopted QP algorithm must be theoretically guaranteed to be less than the given sampling time. 

This execution time certificate has garnered increasing attention within recent years and is still an active research area \cite{richter2011computational, giselsson2012execution, patrinos2013accelerated, cimini2017exact, arnstrom2019exact, cimini2019complexity,arnstrom2020complexity, arnstrom2021unifying, okawa2021linear}. All these works are based on the assumption that \textit{the adopted computation platform performs a fixed number of floating-point operations ([flops]) in constant time},
$$
\textrm{execution time} = \frac{\textrm{total [flops] required by the algorithm}}{\textrm{average [flops] processed per second}}~[s].
$$
where one flop is defined to be one multiplication, subtraction, addition, or division of two floating-point numbers, then, the execution time can be derived by analyzing the total worst-case [flops] which is equivalent to analyzing the worst-case number of iterations if each iteration takes invariant [flops]. This technical note also follows this assumption to certify the execution time of input-constrained MPC problems by analyzing the number of iterations.

Currently, most MPC algorithms obtain the \textit{average} and \textit{worst-case} execution time by performing thousands of closed-loop experiments and then statistically analyzing all execution times. This approach is heuristic and lacks a theoretical certificate. More importantly, this heuristic approach cannot build an explicit and exact relationship between the execution time and the MPC settings such as the length of prediction horizons. Consequently, choosing the appropriate sampling time and embedded processor type for a given MPC setting relies on heavy calibration work \cite{forgione2020efficient}. 

Furthermore, most existing works \cite{wang2009fast, ferreau2014qpoases,stellato2020osqp,wu2023simple, wu2023construction} use the statistical \textit{average} execution time to claim that their proposed algorithm is fast, but in fact, the \textit{worst-case execution} time matters a lot more than \textit{average} execution time. This is not only because the certified \textit{worst-case} execution time (not the \textit{average} execution time), is used to choose the sampling time, but also because only when the \textit{worst-case} execution time (not the \textit{average}  execution time) is small, MPC can be applied to fast dynamic systems.

In MPC applications, online optimization problems have time-changing problem data (but the problem dimension is time-invariant) in closed-loop as the feedback state, the set-point reference signal, or even the model, are all time-changing (such as in online linearized nonlinear MPC and adaptive MPC). Simply put, the number of iterations depends on the convergence speed of an optimization algorithm and the distance between the initial point and the optimal point, both of which are dependent on the data of optimization problems. Therefore, time-changing problem data poses a big challenge in obtaining the execution time certificate of real-time MPC. This paper aims to develop an optimization algorithm that has \textit{only dimension-dependent} (\textit{data-independent}), \textit{simple-calculated}, and \textit{exact} number of iterations, then enabling the certification of the execution time of linear MPC, nonlinear MPC (via online linearized scheme), and adaptive MPC problems.

\subsection{Related work}
In \cite{richter2011computational}, the input-constrained linear MPC problem, resulting in a box-constrained QP (Box-QP) problem, is considered and solved by Nesterov's fast gradient method. They derived a conservative iteration complexity bound which is not only very computationally complicated but also dependent on the problem data. In \cite{giselsson2012execution, patrinos2013accelerated}, the general linear MPC problem with input and state constraints is considered and then transformed into the dual problems, which are solved by the accelerated gradient projection methods. And their worst-case iteration complexity is also dependent on the problem data like the Hessian matrix of the dual problem.  Their \textit{data-dependent} iteration complexity result cannot guarantee the time-invariant number of iterations in online linearization-based MPC problems, such as Real-Time Iteration (RTI ) based nonlinear MPC \cite{gros2020linear}).

In addition to the above first-order method, another popular class of optimization methods is the active-set method. Although active-set methods often run fast in small/medium-scale problems but could have an exponential number of iterations in the worst-case \cite{klee1972good}. In \cite{cimini2017exact, arnstrom2019exact, cimini2019complexity,arnstrom2020complexity, arnstrom2021unifying}, they use active-set methods to solve general linear MPC problems and provide the certification procedure of the worst-case number of iterations. However, their certification procedure relies on the computationally complicated and expensive (thus offline) worst-case partial enumeration technique. This also happens in the work \cite{okawa2021linear} which only considers input-constrained MPC problems. The authors in \cite{okawa2021linear} proposed an $N$-step algorithm (the worst-case iteration complexity is the problem dimension $N$) if the \textit{modified N-step vector} is given. However, the \textit{modified N-step vector} is found by solving a linear programming problem. 

To summarize, all current execution time certificate works of MPC are 
either \textit{data-dependent} (like the first-order methods \cite{richter2011computational, giselsson2012execution, patrinos2013accelerated}), or rely on computationally 
complicated and expensive techniques (like the active-set methods \cite{cimini2017exact, arnstrom2019exact, cimini2019complexity,arnstrom2020complexity, arnstrom2021unifying, okawa2021linear}), making them unsuitable for nonlinear MPC (via online linearized schemes) and adaptive MPC problems. Therefore, to the best of the authors' knowledge, no works extend these algorithms\cite{richter2011computational, giselsson2012execution, patrinos2013accelerated, cimini2017exact, arnstrom2019exact, cimini2019complexity,arnstrom2020complexity, arnstrom2021unifying, okawa2021linear} to certify the execution time of nonlinear MPC problems.

To address the applicability limits of the above first-order and active-set methods, this paper turns to interior-point methods (IPM). IPMs have been exploited in MPC applications, as seen in works such as \cite{rao1998application,zanelli2020forces}, which primarily focus on how to improve the \textit{average} computational efficiency in practice, but lacks execution time certificate in theory. For example, the Mehrotra predictor-corrector IPMs \cite{mehrotra1992implementation}) have been the basis for most interior-point software such as \cite{zanelli2020forces} due to their practical fast convergence speed. However, Mehrotra predictor-corrector IPMs are heuristic and may diverge on some examples \cite[see page 411]{nocedal2006numerical}, \cite{cartis2009some}, without theoretical global convergence proof.

IPMs are well-known for their theoretically certified polynomial time complexity \cite{wright1997primal}, but no work has adopted IPM to certify the execution time of real-time MPC problems because of the ``irony of IPM", a puzzling gap between practical and theoretical computational efficiency \cite{renegar2001mathematical}. In practice, heuristic Mehrotra predictor-corrector IPMs often take less than $<50$ iterations 
(behaving $O(\log(n))$ iteration complexity). 
In theory, the best worst-case iteration complexity of IPMs is $O(\sqrt{n})$, where $n$ denotes the problem dimension. Specifically, according to whether the initial point is strictly feasible or not, IPMs can be divided into infeasible IPMs and feasible IPMs, and the best worst-case iteration complexity of infeasible IPMs and feasible IPMs are certified $O(n)$ and $O(\sqrt{n})$ \cite{wright1997primal,ye2011interior}, respectively. Feasible IPMs are preferable to infeasible IPMs in providing a faster execution time certificate for real-time MPC problems.

However, feasible IPMs are rarely used in practical applications, let alone MPC applications because they have an unrealistic assumption that the initial point is strictly feasible and located in a narrow-centered neighborhood. For example, existing works on feasible IPMs for general LPs \cite{darvay2002weighted}, convex QPs \cite{achache2006new,boudjellal2023new}, and monotone linear complementarity problems \cite{achache2015full} all rely on this assumption. Removing this assumption, namely finding this specified initial point for feasible IPMs, requires solving another linear program (LP), which not only significantly increases the computational cost, but more importantly introduces another challenge: certifying the execution time of the LP itself.  Therefore, feasible IPMs are never used to certify the execution time of real-time MPC.

\subsection{Contribution}
This technical note for the first time develops a tailored and practical feasible IPM algorithm to certify the execution time of input-constrained MPC problems, to enjoy the current best theoretical $O(\sqrt{n})$ iteration complexity. Our novel contributions are four-fold:
\begin{itemize}
    \item[1)] \textit{Removes the assumption} of previous works that the initial point is strictly feasible and located in a narrow neighborhood of the central path. By only considering input-constrained MPC problems, we then exploit the structure of the resulting box-constrained QP (Box-QP) and for the first time innovatively propose a \textit{cost-free} and \textit{data-independent} initialization strategy.
    \item[2)] \textit{Very simple to implement} our proposed feasible IPM algorithm, which adopts full-Newton step thus without a line search procedure.
    \item[3)] \textit{Only dimension-dependent} (data-independent), \textit{simple-calculated}, and \textit{exact} (not worst-case or maximum) number of iterations,
    \[\left\lceil\frac{\log(\frac{2n}{\epsilon})}{-2\log(\frac{\sqrt{2n}}{\sqrt{2n}+\sqrt{2}-1})}\right\rceil + 1,\]is achieved by our proposed algorithm, where $n$ denotes the problem dimension of Box-QP and $\epsilon$ denotes the constant specifying the stopping accuracy (e.g., $1\times 10^{-6}$). Thanks to being \textit{only dimension-dependent} and having \textit{exact} computational complexity, our optimization algorithm is \textit{direct}, in the same manner as direct methods for solving linear equations $Ax=b$ (Cholesky decomposition or QR decomposition) which also have \textit{only dimension-dependent} (data-independent) and \textit{exact} computational complexity. To the best of the author's knowledge, a \textit{direct} optimization algorithm is reported for the first time.
    \item[4)] A \textit{simple-calculated} execution-time certificate is provided by our proposed algorithm.
\end{itemize}

\subsection{Notation}
$\mathbb{R}^n$ denotes the space of $n$-dimensional real vectors, $\mathbb{R}^n_{++}$ is the set of all positive vectors of $\mathbb{R}^n$, and $\mathbb{N}_+$ is the set of positive integers. For a vector $z\in\mathbb{R}^n$, $\|z\|=\sqrt{z_1^2+z_2^2+\cdots+z_n^2}$, $\|z\|_1=\sum_{i=1}^{n}|z_i|$, $\|z\|_{\infty}=\max_i |z_i|$, $\diag(z):\mathbb{R}^n\rightarrow\mathbb{R}^{n\times n}$ maps a vector $z$ to its corresponding diagonal matrix, and $z^2 = (z_1^2,z_2^2,\cdots,z_n^2)^\top$. Given two vectors $z,y \in\mathbb{R}^n_{++}$, their Hadamard product is $zy = (z_1y_1,z_2y_2,\cdots{},z_ny_n)^\top$, $\big(\frac{z}{y}\big)=\big(\frac{z_1}{y_1},\frac{z_2}{y_2},\cdots{},\frac{z_n}{y_n}
\big)^{\!\top}$,  $\sqrt{z}=\left(\sqrt{z_1},\sqrt{z_2},\cdots{},\sqrt{z_n}\right)^{\!\top}$ and $z^{-1}=(z_1^{-1},z_2^{-1},\cdots{},z_n^{-1})^{\top}$. The vector of all ones is denoted by $e=(1,\cdots{},1)^\top$. $\left\lceil x\right\rceil$ maps $x$ to the least integer greater than or equal to $x$. For $z,y\in\mathbb{R}^n$, let $\mathrm{col}(z,y)=[z^{\top},y^{\top}]^{\top}$.

\section{Input-constrained MPC}
In a closed-loop input-constrained MPC setting, at each sampling time $t$, a parametric Box-QP,
\begin{equation}\label{problem_input_MPC}
    \begin{aligned}
        \min_y&\quad\frac{1}{2}y^\top Q(t) y + y^\top d(t)\\\
        \mathrm{s.t.}&\quad  l(t) \leq y \leq u(t)
    \end{aligned}    
\end{equation}
needs to be solved within each sampling interval where $y \in \mathbb{R}^{n}$ denotes the optimization variables. The problem data, including the symmetric positive definite $Q(t)\in \mathbb{R}^{n \times n}$, the vector $d(t)\in \mathbb{R}^n$, and the lower and upper bounds $l(t), u(t)\in\mathbb{R}^n$ (bounded and $l(t) < u(t)$), may be time-varying. 

We assume that the above Box-QP formulation guarantees the stability of input-constrained MPC by choosing the positive definite terminal penalty matrix and the horizon length appropriately, ensuring the fulfillment of sufficient stability criteria, as discussed in \cite{richter2011computational}.

Applying the coordinate transformation,
\[
z = 2 \diag(u(t)-l(t))^{-1} y - 2 \diag(u(t)-l(t))^{-1}l - e
\]
results in an equivalent Box-QP with scaled box constraints,
\begin{subequations}\label{problem_Box_QP}
    \begin{align}
        z^*=&\argmin_z \tfrac{1}{2}z^\top H z + z^\top h\label{problem_Box_QP_objective}\\
        &\ \mathrm{s.t.}\  -e \leq z \leq e\label{proble_Box_QP_box_constraint}
    \end{align}
\end{subequations}
where $H = \diag(u(t)-l(t))Q(t)\diag(u(t)-l(t))$ and $h = \diag(u(t)-l(t))Q(t)(u(t)-l(t)+2d(t))$. The optimal solution $y^*$ of the original input-constrained MPC (\ref{problem_input_MPC}) can be recovered by
\begin{equation}\label{eqn_z*_y*}
    y^* = \tfrac{1}{2}\diag(u(t)-l(t))z^*+\tfrac{1}{2}(u(t)+l(t))
\end{equation}
In the below sections, the scaled box-QP (\ref{problem_Box_QP}) is used to derive and analyze the proposed algorithm. 

\section{Feasible full-Newton IPM}
According to \cite[Ch 5]{boyd2004convex}, the Karush–Kuhn–Tucker (KKT) condition of the scaled box-QP \eqref{problem_Box_QP} is the nonlinear equations,
\begin{subequations}\label{eqn_KKT}
\begin{align}
    Hz + h + \gamma - \theta = 0,\label{eqn_KKT_a}\\
    z + \phi - e=0,\label{eqn_KKT_b}\\
    z - \psi + e=0,\label{eqn_KKT_c}\\
    \gamma \phi = 0,\label{eqn_KKT_d}\\
    \theta \psi = 0,\label{eqn_KKT_e}\\
    (\gamma,\theta,\phi,\psi)\geq0.\label{eqn_KKT_f}
\end{align}
\end{subequations}
where $\gamma$ and $\theta$ denote the Lagrangian variable of the lower bound and upper bound, respectively, and $\phi$ and $\psi$ denote the slack variable of the lower bound and upper bound, respectively.

Primal-dual IPMs use Newton's method to determine the search direction for solving these nonlinear equations. Newton's method involves linearizing equation \eqref{eqn_KKT} around the current iterate and then solving a system of linear equations that results from this process. The solution to these equations provides the search direction $(\Delta z,\Delta\gamma,\Delta\theta,\Delta\phi,\Delta\psi)$. A full-Newton step often violates the bound $(\gamma,\theta,\phi,\psi)\geq0$
so the next iterate $(z,\gamma,\theta,\phi,\psi)+\alpha(\Delta z,\Delta\gamma,\Delta\theta,\Delta\phi,\Delta\psi)$ with a line search parameter $\alpha \in (0,1]$ is used to ensure not exceeding the bound. The straightforward application of Newton's method frequently results in small steps ($\alpha \ll 1$) before reaching the limit, which hinders significant progress toward finding a solution. 

A popular IPM is the path-following approach. This involves introducing a positive parameter $\tau$ to replace \eqref{eqn_KKT_d} and \eqref{eqn_KKT_e} with the equations,
\begin{subequations}\label{eqn_KKT_tau}
\begin{align}
    \gamma \phi = \tau^2 e,\label{eqn_KKT_tau_d}\\
    \theta \psi = \tau^2 e.\label{eqn_KKT_tau_e}
\end{align}
\end{subequations}
It has been shown that there exists one unique solution $(z_{\tau},\gamma_{\tau},\theta_{\tau},\phi_{\tau},\psi_{\tau})$ and the path $\tau \rightarrow (z_{\tau},\gamma_{\tau},\theta_{\tau},\phi_{\tau},\psi_{\tau})$ is called the {\em central path} \cite{wright1997primal}. As $\tau$ approaching $0$, $(z_{\tau},\gamma_{\tau},\theta_{\tau},\phi_{\tau},\psi_{\tau})$ goes to a solution of \eqref{eqn_KKT}. Simply put, primal-dual path-following methods apply Newton's method to \eqref{eqn_KKT_tau} and explicitly restrict the iterates to a neighborhood of the central path, which is an arc of strictly feasible points. The primal-dual feasible set $\mathcal{F}$ and strictly feasible set $\mathcal{F}^0$ are defined as
\[
\begin{aligned}
    \mathcal{F}=\{(z,\gamma,\theta,\phi,\psi)|\eqref{eqn_KKT_a}\mathrm{-}\eqref{eqn_KKT_c},(\gamma,\theta,\phi,\psi)\geq0\},\\
    \mathcal{F}^0=\{(z,\gamma,\theta,\phi,\psi)|\eqref{eqn_KKT_a}\mathrm{-}\eqref{eqn_KKT_c},(\gamma,\theta,\phi,\psi)>0\}.
\end{aligned}
\]
Thanks to the structure of the KKT condition (\ref{eqn_KKT}), we are able to propose a novel cost-free initialization strategy for feasible path-following IPM with application to input-constrained MPC.

\subsection{Strictly feasible initial point}
To obtain the best result of theoretical worst-case complexity, a good strictly feasible initial point is necessary.
\begin{remark}\label{remark_initialization_stragegy}
For $h=0$, the optimal solution of Problem \ref{problem_Box_QP} is $z^*=0$. For $h\neq0$,  first scale the objective \eqref{problem_Box_QP_objective} as 
\[
\min_z \tfrac{1}{2} z^\top (\tfrac{2\lambda}{\|h\|_\infty}H) z + z^\top (\tfrac{2\lambda}{\|h\|_\infty}h)
\]
which does not affect the optimal solution and the scalar $\lambda \in (0,1)$. Denoting $\tilde{H} = \frac{1}{\|h\|_\infty}H$ and $\tilde{h}=\frac{1}{\|h\|_\infty} h$, we have that $\|\tilde{h}\|_\infty =1$.
Then \eqref{eqn_KKT_a} is replaced by 
\[
2\lambda \tilde{H}z+2\lambda\tilde{h}+\gamma-\theta=0
\]
the initialization strategy to solve Problem \ref{problem_Box_QP} is
\begin{equation}\label{eqn_initialization_stragegy}
    z^0 = 0,\gamma^0 =1 - \lambda \tilde{h},\theta^0 =1 + \lambda \tilde{h},\phi^0 = e, \psi^0 = e.
\end{equation}
This set of values is in $\mathcal{F}^0$.
\end{remark}

\subsection{Algorithm description}
To simplify the presentation, two vectors $v=\mathrm{col}(\gamma, \theta) \in \mathbb{R}^{2n}$ and $s=\mathrm{col}(\phi,\psi) \in \mathbb{R}^{2n}$ are introduced. equations (\ref{eqn_KKT_tau_d}) and (\ref{eqn_KKT_tau_e}) are replaced by
\[
    v s = \tau^2e
\]
which is then replaced by
\begin{equation}
    \varphi(vs) = \varphi(\tau^2e)\label{eqn_new_complementary}
\end{equation}
where the function $\varphi: \mathbb{R}^n_+ \rightarrow\mathbb{R}^n_+$, is differentiable on $\mathbb{R}^n_{++}$ such that $\varphi(w)>0$ and $\varphi^{\prime}(w)>0$ for all $w>0$. 
\begin{remark}\label{remark_1}
The classical path-following method is recovered for  $\varphi(w)=w$. Here we consider $\varphi(w)=\sqrt{w}$, then $\varphi^{\prime}(w)=\frac{1}{2\sqrt{w}}$.
\end{remark}
Equation \eqref{eqn_new_complementary} is then linearized as 
\begin{equation}\label{eqn_linearized_new_complementary}
s\varphi^{\prime}(v s)\Delta v  +v\varphi^{\prime}(vs)\Delta s=\varphi(\tau^2e)-\varphi(vs).
\end{equation}
Suppose that $(z, v, s)\in \mathcal{F}^0$ and, according to Remark \ref{remark_1}, a direction $(\Delta z,\Delta v,\Delta s)$ can thus be obtained by solving the system of linear equations
\begin{subequations}\label{eqn_newKKT_compact}
    \begin{align}
        &2\lambda\tilde{H}\Delta z + \Omega \Delta v = 0,\label{eqn_newKKT_compact_a}\\
        & \Omega^T \Delta z + \Delta s = 0,\label{eqn_newKKT_compact_b}\\
        &\sqrt{\frac{s}{v}}\Delta v + \sqrt{\frac{v}{s}}\Delta s = 2(\tau e-\sqrt{v s}),\label{eqn_newKKT_compact_c}
    \end{align}
\end{subequations}
where $\Omega=[I,-I] \in\mathbb{R}^{n \times 2n}$. By letting
\begin{subequations}\label{eqn_Delta_gamma_theta_phi_psi}
    \begin{align}
        &\Delta \gamma=\frac{\gamma}{\phi}\Delta z+2\!\left(\sqrt{\frac{\gamma}{\phi}}\tau e-\gamma\right)\!,\\
        &\Delta \theta=-\frac{\theta}{\psi}\Delta z+2\!\left(\sqrt{\frac{\theta}{\psi}}\tau e-\theta\right)\!,\\
        &\Delta\phi = - \Delta z,\\
        &\Delta\psi = \Delta z,
    \end{align}
\end{subequations}
\eqref{eqn_newKKT_compact} can be reduced into a more compacted system of linear equations,
\begin{equation}{\label{eqn_compact_linsys}}
    \begin{aligned}
\Big(2\lambda\tilde{H}+&\diag\!\Big(\frac{\gamma}{\phi}\Big) + \diag\!\Big(\frac{\theta}{\psi}\Big) \!\Big) \Delta z\\
        &=2\!\left(\sqrt{\frac{\theta}{\psi}}\tau e-\sqrt{\frac{\gamma}{\phi}}\tau e+ \gamma - \theta\right).
    \end{aligned}
\end{equation}
The proposed feasible full-Newton path-following interior-point algorithm is summarized on Algorithm \ref{alg_fullNewton}. In the next section, we prove that Algorithm \ref{alg_fullNewton} converges to the optimal solution of Problem \ref{problem_Box_QP} in $\mathcal{N}=\left\lceil\frac{\log(\frac{2n}{\epsilon})}{-2\log(\frac{\sqrt{2n}}{\sqrt{2n}+\sqrt{2}-1})}\right\rceil\!+1$ iterations.

\begin{algorithm}
    \caption{A \textit{direct} optimization algorithm for input-constrained MPC (\ref{problem_input_MPC})
    }\label{alg_fullNewton}
    \textbf{if }$\|h\|_\infty=0$, \textbf{return} $y^*=\frac{1}{2}(u(t)+l(t))$;\\
    \textbf{otherwise},\\
    Let $\lambda=\frac{1}{\sqrt{n+1}}$, cache $2\lambda\Tilde{H}=\frac{2\lambda}{\|h\|_\infty}H$, $\tilde{h}=\frac{1}{\|h\|_\infty}h$. and $(z,\gamma,\theta,\phi,\psi)$ are initialized from (\ref{eqn_initialization_stragegy}), $\eta=\frac{\sqrt{2}-1}{\sqrt{2n}+\sqrt{2}-1}$ and $\tau=\frac{1}{1-\eta}$, and given a stopping tolerance $\epsilon$, thus the required exact number of iterations $\mathcal{N}=\left\lceil\frac{\log(\frac{2n}{\epsilon})}{-2\log(\frac{\sqrt{2n}}{\sqrt{2n}+\sqrt{2}-1})}\right\rceil+1$.
    \\
    \textbf{for} $k=1, 2,\cdots{}, \mathcal{N}$ \textbf{do}
    \begin{enumerate}[label*=\arabic*., ref=\theenumi{}]
        \item $\tau\leftarrow(1-\eta)\tau$;
        \item solve \eqref{eqn_compact_linsys} for $\Delta z$ by using Cholesky decomposition;
        \item calculate $(\Delta\gamma,\Delta\theta,\Delta\phi,\Delta\psi)$ from \eqref{eqn_Delta_gamma_theta_phi_psi};
        \item  $z\leftarrow z+\Delta z$, $\gamma\leftarrow \gamma+\Delta \gamma$, $\theta\leftarrow \theta+\Delta \theta$, $\phi\leftarrow \phi+\Delta \phi$, $\psi\leftarrow \psi+\Delta \psi$;
    \end{enumerate}
    \textbf{end}\\
    \textbf{return $y^*= \frac{1}{2}\diag(u(t)-l(t))z+\frac{1}{2}(u(t)+l(t))$}.
\end{algorithm}

\subsection{Convergence and worst-case analysis}
For simplify the presentation, introduce
\[
d_v:=\sqrt{\frac{s}{v}}\Delta v, \quad d_s:=\sqrt{\frac{v}{s}}\Delta s,
\]
for which $d_v d_s = \Delta v \Delta s$ and $d_v^\top d_s = \Delta v^\top \Delta s$. Equations \eqref{eqn_newKKT_compact_a} and \eqref{eqn_newKKT_compact_b} imply that
\[
\Delta v^{\!\top\!}
\Delta s=\Delta v^{\!\top\!} (-\Omega^{\!\top\!}\Delta z)=(-\Omega\Delta v)^{\!\top\!}\Delta z=\Delta z^{\!\top\!} (2\lambda\tilde{H})\Delta z.
\]
The positive definiteness of $2\lambda\tilde{H}$ implies that $\Delta z^\top (2\lambda\tilde{H}) \Delta z\geq0$ for any vector $\Delta z$, thus $d_v^\top d_s\geq0$. Then introduce
\[
p:=d_v+d_s,\quad q:=d_v-d_s.
\]
Then $(p^2-q^2)/4 = d_v d_s$ and $(\|p\|^2-\|q\|^2)/4=d_v^\top d_s\geq0$, and
\begin{equation}\label{eqn_qv_pv}
    \|q\|\leq\|p\|.
\end{equation}
Now introduce $\beta:=\sqrt{vs}$; then  \eqref{eqn_newKKT_compact_c} implies that
\begin{equation}\label{eqn_p_v}
    p=2(\tau e-\sqrt{vs})=2(\tau e-\beta).
\end{equation}
With the definition of the proximity measure
\begin{equation}\label{eqn_xi}
\xi(\beta,\tau)=\tfrac{\|\tau e-\beta\|}{\tau}=
\tfrac{\|p\|}{2\tau},
\end{equation}
next we prove that, for small enough proximity measure the full-Newton step will not violate the bound \eqref{eqn_KKT_f}. 
That is, the full-Newton step is strictly feasible.
\begin{lemma}\label{lemma_strictly_feasible}
Let $\xi:=\xi(\beta,\tau) < 1$. Then the full-Newton step is strictly feasible, that is, $v_{+}=v+\Delta v>0$ and $s_{+}=s+\Delta s>0$.
\end{lemma}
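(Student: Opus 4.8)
The plan is to use the homotopy trick that is standard in full--Newton--step analyses: introduce $v(t)=v+t\Delta v$ and $s(t)=s+t\Delta s$ for $t\in[0,1]$, so that $(v(0),s(0))=(v,s)\in\mathcal F^0$ and $(v_+,s_+)=(v(1),s(1))$, and then show that the coordinatewise Hadamard product $v(t)s(t)$ stays strictly positive for every $t\in[0,1]$. Once that is established, continuity of the affine maps $t\mapsto v_i(t)$, $t\mapsto s_i(t)$ together with $v_i(0)=v_i>0$, $s_i(0)=s_i>0$ forces $v_i(t)>0$ and $s_i(t)>0$ on all of $[0,1]$, and evaluating at $t=1$ gives the claim.

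First I would compute $v(t)s(t)$ in closed form. Expanding, $v(t)s(t)=vs+t\,(s\Delta v+v\Delta s)+t^2\,\Delta v\Delta s$. From the linearized complementarity equation \eqref{eqn_linearized_new_complementary} specialized to $\varphi(t)=\sqrt t$ one obtains $s\Delta v+v\Delta s=2\beta(\tau e-\beta)=\beta p$, using \eqref{eqn_p_v}; and $\Delta v\Delta s=d_vd_s=(p^2-q^2)/4$ as noted in the excerpt. Substituting $vs=\beta^2$ gives
\[
v(t)s(t)=\beta^2+t\,\beta p+\tfrac{t^2}{4}p^2-\tfrac{t^2}{4}q^2 .
\]
The first three terms complete a square: since $p=2(\tau e-\beta)$ we have $\beta+\tfrac t2 p=(1-t)\beta+t\tau e$, hence, coordinatewise,
\[
v(t)s(t)=\big((1-t)\beta+t\tau e\big)^2-\tfrac{t^2}{4}q^2 .
\]

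Second I would estimate the two terms. Coordinatewise $(1-t)\beta_i\ge0$ and $t\tau\ge0$ for $t\in[0,1]$, so $\big((1-t)\beta_i+t\tau\big)^2\ge t^2\tau^2$; on the other hand $\|q\|\le\|p\|=2\tau\xi$ by \eqref{eqn_qv_pv}, \eqref{eqn_p_v} and \eqref{eqn_xi}, so $\tfrac{t^2}{4}q_i^2\le t^2\tau^2\xi^2$. Therefore $v_i(t)s_i(t)\ge t^2\tau^2(1-\xi^2)>0$ for every $i$ and every $t\in(0,1]$ whenever $\xi<1$, while $v_i(0)s_i(0)=\beta_i^2>0$. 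Hence $v_i(t)s_i(t)>0$ throughout $[0,1]$, so $v_i(t)$ cannot vanish on $[0,1]$; being affine with $v_i(0)>0$ it remains positive there, and likewise $s_i(t)>0$. Taking $t=1$ yields $v_+>0$ and $s_+>0$.

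I expect the only genuinely delicate points to be (i) the bookkeeping between $(\Delta v,\Delta s)$ and $(d_v,d_s)$ — i.e.\ keeping straight that $\Delta v\Delta s=d_vd_s$ and that $s\Delta v+v\Delta s=\beta p$ after clearing the factor $\varphi'(vs)=\tfrac{1}{2\beta}$ — and (ii) observing that the endpoint identity $v_+s_+=\tau^2 e-\tfrac14 q^2$ by itself only forces $v_{+,i}$ and $s_{+,i}$ to have the same sign, so the continuity/homotopy argument over all $t\in[0,1]$ is what actually pins that common sign down as positive. The perfect--square manipulation $\beta^2+t\beta p+\tfrac{t^2}{4}p^2=\big((1-t)\beta+t\tau e\big)^2$ is then routine once the substitution $p=2(\tau e-\beta)$ is in hand.
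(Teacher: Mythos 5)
Your proof is correct and follows essentially the same route as the paper: both introduce the homotopy $v(t)=v+t\Delta v$, $s(t)=s+t\Delta s$, show $v(t)s(t)>0$ on $[0,1]$ using $\|q\|\le\|p\|=2\tau\xi$, and conclude by the sign-permanence of the affine coordinate functions. Your completed-square form $\bigl((1-t)\beta+t\tau e\bigr)^2-\tfrac{t^2}{4}q^2$ is just a cosmetic variant of the paper's convex-combination decomposition $(1-t)\beta^2+t\bigl[\tau^2e-(1-t)\tfrac{p^2}{4}-t\tfrac{q^2}{4}\bigr]$, and your remark that the endpoint identity alone only fixes a common sign is exactly the point the continuity step is there to settle.
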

\begin{proof} For each $0\leq\alpha\leq1$, let $v_{+}(\alpha)=v+\alpha\Delta v$ and $s_{+}(\alpha)=s+\alpha\Delta s$. Then
\[
\begin{aligned}
 v_{+}(\alpha) s_{+}(\alpha)&=v s+\alpha(v \Delta s+s\Delta v)+\alpha^2\Delta v \Delta s\\
 &=v s+\alpha\beta(d_v+d_s)+\alpha^2 d_vd_s\\
    &=\beta^2+\alpha\beta p+\alpha^2\big(\tfrac{p^2}{4}-\tfrac{q^2}{4}\big)\\
    &=(1-\alpha)\beta^2+\alpha(\beta^2+\beta p)+\alpha^2\big(\tfrac{p^2}{4}-\tfrac{q^2}{4}\big)\\
\end{aligned}
\]
From (\ref{eqn_p_v}) we have $\beta + \frac{p}{2}=\tau e$, and $\beta^2+\beta p=\tau^2e-\frac{p^2}{4}$; then 
\begin{equation}\label{eqn_v+_s+}
    v_{+}(\alpha) s_{+}(\alpha)=(1-\alpha)\beta^2+\alpha\big(\tau^2e-\tfrac{p^2}{4}+\alpha(\tfrac{p^2}{4}-\tfrac{q^2}{4})\big)
\end{equation}
Thus, the inequality $v_{+}(\alpha)s_{+}(\alpha)>0$ holds if
\[
\left\|(1-\alpha) \tfrac{p^2}{4}+\alpha \tfrac{q^2}{4}\right\|_{\infty}\!<\tau^2.
\]
Using \eqref{eqn_qv_pv} and \eqref{eqn_xi}, if $\xi<1$, then
\[
\begin{aligned}
\left\|(1-\alpha) \tfrac{p^2}{4}+\alpha \tfrac{q^2}{4}\right\|_{\infty}&\leq(1-\alpha)\left\|\tfrac{p^2}{4}\right\|_{\infty}+\alpha\left\|\tfrac{q^2}{4}\right\|_{\infty}\\
&\leq(1-\alpha)\tfrac{\left\|p\right\|^2}{4}+\alpha\tfrac{\left\|q\right\|^2}{4} \leq\tfrac{\|p\|^2}{4}\\
&=\xi^2\tau^2<\tau^2.
\end{aligned}
\]
Hence, for any $0\leq\alpha\leq1$, we have $v_{+}(\alpha)s_{+}(\alpha)>0$. As a result, the linear functions of $\alpha$, $v_{+}(\alpha)$ and $s_{+}(\alpha)$, do not change sign on the interval $[0,1]$. For $\alpha=0$, we have $v_{+}(0)=v>0$ and $s_{+}(0)=s>0$ thus $v(1)>0$ and $s(1)>0$. This completes the lemma.
\end{proof}

\vspace{0.2cm} 

Next is to prove there exists an upper bound for the duality gap after a full-Newton step.
\begin{lemma}\label{lemma_duality_gap}
    After a full-Newton step, let $v_{+}=v+\Delta v$ and $s_{+}=s+\Delta s$, then the duality gap satisfies 
    \[
        v_{+}^\top s_{+}\leq(2n)\tau^2.
    \]
\end{lemma}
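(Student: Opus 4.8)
The plan is to reuse the identity for $v_{+}(\alpha)s_{+}(\alpha)$ already derived in the proof of \emph{Lemma \ref{lemma_strictly_feasible}}, evaluated at the full step $\alpha = 1$. Setting $\alpha=1$ in \eqref{eqn_v+_s+} makes the $(1-\alpha)\beta^2$ term vanish and collapses the bracket to $\tau^2 e - \frac{p^2}{4} + \frac{p^2}{4} - \frac{q^2}{4}$, so I would first record the clean componentwise identity
\[
v_{+}s_{+} = \tau^2 e - \frac{q^2}{4}.
\]

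Next I would pass from the Hadamard product to the inner product by multiplying on the left by $e^T$. Since $v,s\in\mathbb{R}^{2n}$, we have $e^Te = 2n$ and $e^T q^2 = \|q\|^2$, hence
\[
v_{+}^{T}s_{+} = e^{T}(v_{+}s_{+}) = 2n\,\tau^2 - \frac{\|q\|^2}{4}.
\]
The final step is simply to drop the nonnegative term: because $\|q\|^2\ge 0$, we obtain $v_{+}^{T}s_{+}\le 2n\tau^2$, which is the claim.

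There is essentially no hard part here; the only thing to be careful about is bookkeeping of the dimension — emphasizing that $v$ and $s$ live in $\mathbb{R}^{2n}$ (they stack $\gamma,\theta$ and $\phi,\psi$), so the all-ones vector contributes a factor $2n$ rather than $n$. One could optionally remark that equality holds exactly when $q = d_v - d_s = 0$, i.e.\ when $\Delta v$ and $\Delta s$ are ``balanced'' in the scaled coordinates, but this is not needed for the bound. The lemma then feeds into the iteration-count analysis by tracking how $\tau$ (hence $2n\tau^2$) shrinks geometrically under the update $\tau \leftarrow (1-\eta)\tau$.
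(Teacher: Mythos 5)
Your proposal is correct and matches the paper's own proof essentially line for line: both set $\alpha=1$ in \eqref{eqn_v+_s+} to get $v_{+}s_{+}=\tau^2 e-\frac{q^2}{4}$, contract with $e^T$ to obtain $v_{+}^Ts_{+}=2n\tau^2-\frac{\|q\|^2}{4}$, and drop the nonnegative term. The dimension bookkeeping ($v,s\in\mathbb{R}^{2n}$, hence the factor $2n$) is exactly the point the paper relies on as well.
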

\begin{proof}
    Suppose $\xi<1$ so from Lemma \ref{lemma_strictly_feasible} we obtain that $v_+>0$ and $s_+>0$. Now substituting $\alpha=1$ into \eqref{eqn_v+_s+} gives
    \[
        v_+s_+=\beta_+^2 = \tau^2e-\tfrac{q^2}{4}
    \]
    so we have 
\begin{equation}\label{eqn_beta_+}
    \begin{aligned}
v_{+}^\top s_{+}&=e^\top(v_+s_+)=(2n)\tau^2-\tfrac{e^{\!\top\!} (q^2)}{4}\\
        &=(2n)\tau^2-\tfrac{\|q\|^2}{4}\leq(2n)\tau^2. 
    \end{aligned}
    \end{equation}
    This completes the lemma.
\end{proof}

Thus, the duality gap will converge to the given stopping criteria if $(2n)\tau^2$ converges to the given stopping criteria. In the below lemma, we investigate how the proximity measure $\xi(\beta_+,\tau_+)$ changes after a full-Newton step and an update of $\tau$.
\begin{lemma}\label{lemma_xi}
Suppose that $\xi=\xi(\beta,\tau)<1$ and $\tau_+=(1-\eta)\tau$ where $0<\eta<1$. Then
\[
\xi_+=\xi(\beta_+,\tau_+)\leq\tfrac{\xi^2}{1+\sqrt{1-\xi^2}}+\tfrac{\eta\sqrt{2n}}{1-\eta}.
\]
Furthermore, if $\xi\leq\frac{1}{\sqrt{2}}$ and $\eta=\frac{\sqrt{2}-1}{\sqrt{2n}+\sqrt{2}-1}$ then $\xi_+\leq\frac{1}{\sqrt{2}}$.
\end{lemma}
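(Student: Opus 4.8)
The plan is to work straight from the definition $\xi_+=\xi(\beta_+,\tau_+)=\|\tau_+e-\beta_+\|/\tau_+$ and split the numerator according to the update $\tau_+=(1-\eta)\tau$. Writing $\tau_+e-\beta_+=-\eta\tau e+(\tau e-\beta_+)$ and using the triangle inequality together with $\|e\|=\sqrt{2n}$ (the all-ones vector here lives in $\mathbb{R}^{2n}$) gives $\|\tau_+e-\beta_+\|\le \eta\tau\sqrt{2n}+\|\tau e-\beta_+\|$. Dividing by $\tau_+=(1-\eta)\tau$, the first term contributes exactly $\frac{\eta\sqrt{2n}}{1-\eta}$, so the whole first inequality reduces to the estimate $\|\tau e-\beta_+\|\le \frac{\tau\xi^2}{1+\sqrt{1-\xi^2}}$.

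To obtain that estimate I would reuse the identity already derived in the proof of Lemma~\ref{lemma_duality_gap}, namely $\beta_+^2=v_+s_+=\tau^2e-\tfrac14 q^2$, which holds componentwise because $\xi<1$ makes $v_+,s_+>0$ by Lemma~\ref{lemma_strictly_feasible} and because $\tfrac14 q_i^2\le \tfrac14\|q\|^2\le \tfrac14\|p\|^2=\tau^2\xi^2<\tau^2$ by \eqref{eqn_qv_pv} and \eqref{eqn_xi}. Rationalizing each component, $\tau-\beta_{+,i}=\frac{q_i^2/4}{\tau+\sqrt{\tau^2-q_i^2/4}}$. Since $q_i^2\le\|q\|^2$ the denominator is at least $\tau+\sqrt{\tau^2-\|q\|^2/4}$, so squaring, summing, and using $\sum_i q_i^4\le\big(\sum_i q_i^2\big)^2=\|q\|^4$ yields $\|\tau e-\beta_+\|\le\frac{\|q\|^2/4}{\tau+\sqrt{\tau^2-\|q\|^2/4}}$. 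Because $a\mapsto \frac{a}{\tau+\sqrt{\tau^2-a}}$ is increasing on $[0,\tau^2]$ and $\|q\|^2/4\le\tau^2\xi^2$, substituting $a=\tau^2\xi^2$ gives the claimed bound $\frac{\tau\xi^2}{1+\sqrt{1-\xi^2}}$; combined with the triangle-inequality step this proves the first inequality of the lemma.

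For the second claim I would first simplify $\frac{\xi^2}{1+\sqrt{1-\xi^2}}=1-\sqrt{1-\xi^2}$, which is monotone increasing in $\xi$ on $[0,1]$, hence at $\xi=\tfrac{1}{\sqrt2}$ it equals $1-\tfrac{1}{\sqrt2}$. With $\eta=\frac{1}{4\sqrt{2n}}$ one has $\eta\sqrt{2n}=\tfrac14$ and $1-\eta=1-\frac{1}{4\sqrt{2n}}$, so the first inequality gives $\xi_+\le\frac{\,1/4+1-1/\sqrt2\,}{1-\frac{1}{4\sqrt{2n}}}$. The desired bound $\xi_+\le\tfrac{1}{\sqrt2}$ is then equivalent to $\sqrt2\big(\tfrac54-\tfrac{1}{\sqrt2}\big)\le 1-\frac{1}{4\sqrt{2n}}$, i.e. $\tfrac{5}{4}\sqrt2-1\le 1-\frac{1}{4\sqrt{2n}}$; the left side is $\approx 0.768$ while the right side is at least $1-\frac{1}{4\sqrt2}\approx 0.823$ for every $n\ge1$, so this holds.

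\textbf{Main obstacle.} The only genuinely delicate point is pinning down the constant in $\|\tau e-\beta_+\|\le \frac{\tau\xi^2}{1+\sqrt{1-\xi^2}}$: one must carry the componentwise rationalization through an $\ell_2$-norm, which requires both $\sum_i q_i^4\le\big(\sum_i q_i^2\big)^2$ and the monotonicity of $a\mapsto \frac{a}{\tau+\sqrt{\tau^2-a}}$. Everything else—the triangle-inequality split, the division by $\tau_+$, and the final numerical check—is routine bookkeeping.
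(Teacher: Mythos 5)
Your proposal is correct and follows essentially the same route as the paper: the same split $\tau_+e-\beta_+=-\eta\tau e+(\tau e-\beta_+)$ with $\|e\|=\sqrt{2n}$, the same identity $\beta_+^2=\tau^2e-\tfrac14q^2$ with $\|q\|\le\|p\|=2\tau\xi$, and the same rationalization to reach $\frac{\xi^2}{1+\sqrt{1-\xi^2}}$ (the paper does the rationalization in vector form, dividing $\|\tau^2e-\beta_+^2\|$ by $\min(\tau e+\beta_+)\ge\tau(1+\sqrt{1-\xi^2})$, where yours is componentwise plus $\|q^2\|\le\|q\|^2$ — a cosmetic difference). Your numerical check of the second claim is likewise equivalent to the paper's bound $\frac{5-2\sqrt{2}}{4-1/\sqrt{2}}<\frac{1}{\sqrt{2}}$.
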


\begin{proof}
Let $\tau_+=(1-\eta)\tau$, then
\begin{equation}\label{eqn_xi_+_part1}
\begin{aligned}
\xi_+&=\xi(\beta_+,\tau_+)=\frac{\|\tau_+e-\beta_+\|}{\tau_+}\\
&=\frac{\|(1-\eta)\tau e-(1-\eta)\beta_+-\eta\beta_+\|}{(1-\eta)\tau}\\
&\leq\frac{\|\tau e-\beta_+\|}{\tau}+\frac{\eta}{1-\eta}\frac{\|\beta_+\|}{\tau}
\end{aligned}
\end{equation}
Equation \eqref{eqn_beta_+}) implies that
\[
\frac{\|\beta_+\|}{\tau}\leq\sqrt{2n}
\]
and
\[
\begin{aligned}
   \min(\beta_+^2)&=\min\,(\tau^2e-\tfrac{q^2}{4})=\tau^2-\tfrac{\|q^2\|_\infty}{4}\\
   &\geq\tau^2-\tfrac{\|q\|^2}{4}\geq\tau^2-\tfrac{\|p\|^2}{4}\\
   &=\tau^2(1-\xi^2)
\end{aligned}
\]
which yields
\begin{equation}\label{eqn_min_beta_+}
    \min(\beta_+)\geq\tau\sqrt{1-\xi^2}.
\end{equation}
Furthermore, from (\ref{eqn_qv_pv}), (\ref{eqn_beta_+}), (\ref{eqn_min_beta_+}) and the Cauchy–Schwarz inequality,
\[
\begin{aligned}
 \frac{\|\tau e-\beta_{+}\|}{\tau}&=\frac{1}{\tau}\left\|\frac{\tau^2 e-\beta_+^2}{\tau e+\beta_+}\right\|\leq\frac{1}{\tau}\frac{\|\tau^2e-\beta_+^2\|}{\min(\tau e+\beta_+)}\\
 &=\frac{1}{\tau}\frac{\|\tau^2e-\beta_+^2\|}{\tau+\min(\beta_+)}\leq\frac{\|\tau^2 e-\beta_+^2\|}{\tau^2(1+\sqrt{1-\xi^2})}\\
 &=\frac{\|q^2\|}{4\tau^2(1+\sqrt{1-\xi^2})}\leq\frac{\|q\|^2}{4\tau^2(1+\sqrt{1-\xi^2})}\\
 &\leq\frac{\|p\|^2}{4\tau^2(1+\sqrt{1-\xi^2})}=\frac{\xi^2}{1+\sqrt{1-\xi^2}}
\end{aligned}
\]
thus, based on \eqref{eqn_xi_+_part1}, we have
\[
\xi_+=\xi(\beta_+,\tau_+)\leq\frac{\xi^2}{1+\sqrt{1-\xi^2}}+\frac{\eta\sqrt{2n}}{1-\eta}
\]
This proves the first part of the lemma. 
Now let $\eta=\frac{\sqrt{2}-1}{\sqrt{2n}+\sqrt{2}-1}$, and if $\xi\leq\frac{1}{\sqrt{2}}$, we deduce $\frac{\xi^2}{1+\sqrt{1-\xi^2}}\leq\frac{2-\sqrt{2}}{2}$. Thus,
\[
\begin{aligned}
  \xi_+&\leq\frac{2-\sqrt{2}}{2}+\frac{\frac{\sqrt{2}-1}{\sqrt{2n}+\sqrt{2}-1}}{1-\frac{\sqrt{2}-1}{\sqrt{2n}+\sqrt{2}-1}}\sqrt{2n}=\frac{1}{\sqrt{2}}.
\end{aligned}
\]
The proof of the lemma is complete.
\end{proof}

\begin{lemma}\label{lemma_xi_condition}
The value of $\xi(\beta,\tau)$ before the first iteration is denoted as
$\xi^0=\xi(\beta^0,(1-\eta)\tau^0)$. If $(1-\eta)\tau^0=1$, $\lambda=\frac{1}{\sqrt{n+1}}$, then $\xi^0\leq\frac{1}{\sqrt{2}}$ and $\xi(\beta, w)\leq\frac{1}{\sqrt{2}}$ is always satisfied.
\end{lemma}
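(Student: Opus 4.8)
The plan is to split the statement into two parts: first, establishing the explicit numerical bound $\xi^0\le\frac{1}{\sqrt2}$ on the initial proximity (the substantive part), and second, propagating this bound to every subsequent iterate by a short induction built on Lemma~\ref{lemma_xi}.

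For the first part, I would start by evaluating $\beta^0=\sqrt{v^0s^0}$ explicitly from the initialization \eqref{eqn_initialization_stragegy}. Since $s^0=(\phi^{0\,T},\psi^{0\,T})^T=(e^T,e^T)^T$ and $v^0=(\gamma^{0\,T},\theta^{0\,T})^T=\big((e-\lambda\tilde h)^T,(e+\lambda\tilde h)^T\big)^T$, the Hadamard product $v^0s^0$ equals $v^0$ itself, so $\beta^0=\big((\sqrt{e-\lambda\tilde h})^T,(\sqrt{e+\lambda\tilde h})^T\big)^T$ (well defined because $\lambda=\frac{1}{\sqrt{n+1}}<1$ and $\|\tilde h\|_\infty=1$). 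Using $(1-\eta)\tau^0=1$ together with \eqref{eqn_xi},
\[
(\xi^0)^2=\|e-\beta^0\|^2=\sum_{i=1}^{n}\Big[\big(1-\sqrt{1-\lambda\tilde h_i}\big)^2+\big(1-\sqrt{1+\lambda\tilde h_i}\big)^2\Big].
\]
Expanding each summand and using the identity $(\sqrt{1-t}+\sqrt{1+t})^2=2+2\sqrt{1-t^2}$ collapses the $i$-th term to $H(\lambda^2\tilde h_i^2)$, where $H(x):=4-2\sqrt{2+2\sqrt{1-x}}$ is nonnegative and increasing on $[0,1)$. Since $\|\tilde h\|_\infty=1$ forces $\lambda^2\tilde h_i^2\le\lambda^2$ for all $i$, monotonicity of $H$ gives $(\xi^0)^2\le nH(\lambda^2)$, and substituting $\lambda^2=\frac{1}{n+1}$ reduces the goal $\xi^0\le\frac{1}{\sqrt2}$ to the scalar inequality
\[
n\Big(4-2\sqrt{2+2\sqrt{\tfrac{n}{n+1}}}\,\Big)\le\tfrac12,\qquad n\in\mathbb{N}_+.
\]

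The hard part will be this scalar inequality, since it is asymptotically tight (the left-hand side increases to $\frac12$ as $n\to\infty$), so crude estimates are not enough. The plan here is to set $\rho:=\sqrt{n/(n+1)}\in[\tfrac{1}{\sqrt2},1)$, note $n=\rho^2/(1-\rho^2)$, and rationalize $2-\sqrt{2+2\rho}=2(1-\rho)/(2+\sqrt{2+2\rho})$; after cancellation the inequality becomes $8\rho^2\le 2(1+\rho)+\sqrt2\,(1+\rho)^{3/2}$ on $[\tfrac{1}{\sqrt2},1]$. Letting $\Phi(\rho)$ be the right-hand side minus the left, one has $\Phi(1)=0$ and $\Phi''(\rho)=\tfrac{3\sqrt2}{4}(1+\rho)^{-1/2}-16<0$, so $\Phi$ is concave and therefore dominates the chord joining $(\tfrac{1}{\sqrt2},\Phi(\tfrac{1}{\sqrt2}))$ and $(1,0)$; that chord is nonnegative on $[\tfrac{1}{\sqrt2},1]$ once the single value $\Phi(\tfrac{1}{\sqrt2})=\sqrt2-2+\sqrt2(1+\tfrac{1}{\sqrt2})^{3/2}>0$ is checked, so $\Phi\ge0$ on the whole interval and hence $\xi^0\le\frac{1}{\sqrt2}$.

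For the second part, I would argue by induction on the iteration index that the proximity in force at the start of every Newton step is $\le\frac{1}{\sqrt2}$. The base case is exactly $\xi^0=\xi(\beta^0,(1-\eta)\tau^0)\le\frac{1}{\sqrt2}<1$ just established, which is the relevant proximity for the first iteration because Algorithm~\ref{alg_fullNewton} performs $\tau\leftarrow(1-\eta)\tau^0=1$ before solving \eqref{eqn_compact_linsys}. For the inductive step, assuming the current proximity is $\le\frac{1}{\sqrt2}<1$: Lemma~\ref{lemma_strictly_feasible} keeps the full Newton step inside $\mathcal F^0$, and Lemma~\ref{lemma_xi}, applied with $\eta=\frac{1}{4\sqrt{2n}}$ and $\xi\le\frac{1}{\sqrt2}$, shows that the proximity after the step and the update $\tau_+=(1-\eta)\tau$ again satisfies $\xi_+\le\frac{1}{\sqrt2}$. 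Hence $\xi(\beta,\tau)\le\frac{1}{\sqrt2}$ holds at every iteration, which completes the proof.
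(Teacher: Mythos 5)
Your proposal is correct, and the first half follows the paper's own reduction exactly: you compute $\beta^0=\sqrt{v^0s^0}$ from \eqref{eqn_initialization_stragegy}, collapse the $i$-th summand via $\bigl(\sqrt{1-t}+\sqrt{1+t}\bigr)^2=2+2\sqrt{1-t^2}$, use $\|\tilde h\|_\infty=1$ to bound every term by its value at $\tilde h_i^2=1$, and arrive at the same scalar target $n\bigl(4-2\sqrt{2+2\sqrt{1-\lambda^2}}\,\bigr)\le\tfrac12$ that the paper derives (the paper writes it as $4n-2n\sqrt{2+2\sqrt{1-\lambda^2}}\le\tfrac12$). Where you genuinely diverge is in settling that inequality. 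The paper squares twice, isolates $\lambda^2\le\frac1n-\frac{5}{16n^2}+\frac{1}{32n^3}-\frac{1}{1024n^4}$, substitutes $\lambda^2=\frac{1}{n+1}$, and clears denominators down to the manifestly nonnegative expression $\frac{n^2}{32}(22n-9)+\frac{1}{1024}(31n-1)$. You instead substitute $\rho=\sqrt{n/(n+1)}$, rationalize $2-\sqrt{2+2\rho}$, and reduce to $\Phi(\rho):=2(1+\rho)+\sqrt2\,(1+\rho)^{3/2}-8\rho^2\ge0$ on $[\tfrac{1}{\sqrt2},1]$, which follows from $\Phi(1)=0$, $\Phi(\tfrac{1}{\sqrt2})>0$, and concavity ($\Phi''=\tfrac{3\sqrt2}{4}(1+\rho)^{-1/2}-16<0$); I checked each of these computations and they are right. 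Your route trades the paper's longer but purely rational-arithmetic chain for a shorter continuous argument requiring one second-derivative estimate and one endpoint evaluation; both correctly handle the fact that the bound is asymptotically tight (the left side tends to $\tfrac12$ as $n\to\infty$). For the second claim, your explicit induction invoking Lemma~\ref{lemma_strictly_feasible} and Lemma~\ref{lemma_xi} is the same argument the paper compresses into one sentence, just stated more carefully.
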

\begin{proof}
The equality $(1-\eta)\tau^0=1$ implies that
\[
\begin{aligned}
\xi^0&=\frac{\|(1-\eta)\tau^0e-\beta^0\|}{(1-\eta)\tau^0}=
\|e-\beta^0\|\\
&=\sqrt{\sum_{i=1}^n\left(1-\sqrt{1-\lambda \tilde{h}_i}\right)^{\!\!2}+\left(1-\sqrt{1+\lambda \tilde{h}_i}\right)^{\!\!2}}\\
&=\sqrt{2n+2n-2\sum_{i=1}^{n}\sqrt{1-\lambda \tilde{h}_i}+\sqrt{1+\lambda \tilde{h}_i}}
\end{aligned}
\]
Denote $m_i=\sqrt{1-\lambda \tilde{h}_i}+\sqrt{1+\lambda \tilde{h}_i}$; then 
\[
m_i^2=1-\lambda\tilde{h}_i+1+\lambda\tilde{h}_i+2\sqrt{1-\lambda^2\tilde{h}_i^2}=2+2\sqrt{1-\lambda^2\tilde{h}_i^2}
\]
Since $\|\tilde{h}\|_\infty=1$, $m_i^2\geq2+2\sqrt{1-\lambda^2}$, that is, $m_i\geq\sqrt{2+2\sqrt{1-\lambda^2}}$. Also implied is
\[
\xi^0=\sqrt{4n-2\sum_{i=1}^{n}m_i}\leq\sqrt{4n-2n\sqrt{2+2\sqrt{1-\lambda^2}}}
\]
thus the inequality $\xi^0\leq\frac{1}{\sqrt{2}}$ holds if
\[
\begin{aligned}
 &4n-2n\sqrt{2+2\sqrt{1-\lambda^2}}\leq\!\tfrac{1}{2}\\\
 \leftrightarrows\quad&2-\tfrac{1}{4n}\leq\sqrt{2+2\sqrt{1-\lambda^2}}\\
\leftrightarrows\quad&2+2\sqrt{1-\lambda^2}\geq\big(2-\tfrac{1}{4n}\big)^2\\
\leftrightarrows\quad&\sqrt{1-\lambda^2}\geq1-\tfrac{1}{2n}+\tfrac{1}{32n^2}\\
\leftrightarrows\quad&\lambda^2\leq1-\left(1-\tfrac{1}{2n}+\tfrac{1}{32n^2}\right)^{2}\\
&=\tfrac{1}{n}-\tfrac{5}{16n^2}+\tfrac{1}{32n^3}-\tfrac{1}{1024n^4}
\end{aligned}
\]
For $\lambda=\frac{1}{\sqrt{n+1}}$; this inequality holds if
\[
\begin{aligned}
 &\tfrac{1}{n+1}\leq\tfrac{1}{n}-\tfrac{5}{16n^2}+\tfrac{1}{32n^3}-\tfrac{1}{1024n^4}, \ \forall n\in \mathbb{N}_+\\
 \leftrightarrows\quad&\tfrac{1}{n}\big(\tfrac{1}{n+1}-\tfrac{5}{16n}+\tfrac{1}{32n^2}-\tfrac{1}{1024n^3}\big)\!\geq0, \ \forall n\in \mathbb{N}_+\\
 \leftrightarrows\quad&\tfrac{n^3}{n+1}\geq\tfrac{5}{16}n^2-\tfrac{1}{32}n+\tfrac{1}{1024}, \ \forall n\in \mathbb{N}_+\\
\leftrightarrows\quad&n^3\geq\left(\tfrac{5}{16}n^2-\tfrac{1}{32}n+\tfrac{1}{1024}\right)\left(n+1\right),\ \forall n\in \mathbb{N}_+\\
\leftrightarrows\quad&n^3\geq\tfrac{5}{16}n^3+\tfrac{9}{32}n^2-\tfrac{31}{1024}n+\tfrac{1}{1024}, \ \forall n\in \mathbb{N}_+\\
\leftrightarrows\quad&\tfrac{11}{16}n^3-\tfrac{9}{32}n^2+\tfrac{31}{1024}n-\tfrac{1}{1024}\geq0, \ \forall n\in \mathbb{N}_+\\
\leftrightarrows\quad&\tfrac{n^2}{32}(22n-9)+\tfrac{1}{1024}(31n-1)\geq0,\ \forall n\in \mathbb{N}_+
\end{aligned}
\]
Obviously, the last inequality holds, thus the first part of the lemma is proved.

From Lemma \ref{lemma_xi} if $\xi^0\leq\frac{1}{\sqrt{2}}$ is satisfied then $\xi(\beta,\tau)\leq\frac{1}{\sqrt{2}}$ is always satisfied through the iterations. The proof of the lemma is complete.
\end{proof}
\begin{remark}
    From Lemma \ref{lemma_xi_condition}, the assumption of Lemmas \ref{lemma_strictly_feasible}
    and \ref{lemma_xi}, namely $\xi(\beta,\tau)<1$, is satisfied.
\end{remark}
\begin{lemma}\label{lemma_max_iters}
Given $v^0=\mathrm{col}(\gamma^0,\theta^0)$ and $s^0=\mathrm{col}(\phi^0,\psi^0)$ from (\ref{eqn_initialization_stragegy}), they are strictly feasible. Let $v_k,s_k$ be the $k$th iterates of $v,s$, then the inequalities $v_k^{\top\!} s_k\leq \epsilon$ is satisfied for 
\begin{equation}
    k \geq\! \left\lceil \frac{\log(\frac{2n(\tau^0)^2}{\epsilon})}{-2\log(1-\eta)}\right\rceil.
\end{equation}
\end{lemma}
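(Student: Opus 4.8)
The plan is to combine the one-step duality-gap estimate of Lemma \ref{lemma_duality_gap} with the geometric shrinkage of the barrier parameter $\tau$, and then convert the resulting exponential inequality into the stated ceiling bound by the elementary estimate $\log(1-\eta)\le-\eta$.

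First I would keep track of the barrier parameter. Since Step~1 of Algorithm \ref{alg_fullNewton} performs $\tau\leftarrow(1-\eta)\tau$ before each full Newton step, after $k$ iterations the current value is $\tau^k=(1-\eta)^k\tau^0$. By Lemma \ref{lemma_xi_condition} the proximity measure satisfies $\xi(\beta,\tau)\le\frac{1}{\sqrt2}<1$ at every iteration; hence Lemma \ref{lemma_strictly_feasible} keeps every iterate in $\mathcal F^0$ (so the gap $(v^k)^Ts^k$ stays positive) and the hypothesis of Lemma \ref{lemma_duality_gap} is met throughout. Applying Lemma \ref{lemma_duality_gap} at iteration $k$ then yields
\[
(v^k)^Ts^k\le(2n)(\tau^k)^2=(2n)(1-\eta)^{2k}(\tau^0)^2 .
\]

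Next I would ask when the right-hand side drops below $\epsilon$. Taking logarithms in $(2n)(1-\eta)^{2k}(\tau^0)^2\le\epsilon$ and using $\log(1-\eta)<0$ gives the equivalent condition $k\ge\log\!\big(2n(\tau^0)^2/\epsilon\big)\big/\big(-2\log(1-\eta)\big)$. Finally I would invoke $1-\eta\le e^{-\eta}$, i.e.\ $-\log(1-\eta)\ge\eta$, so that $\log\!\big(2n(\tau^0)^2/\epsilon\big)\big/\big(-2\log(1-\eta)\big)\le\frac{1}{2\eta}\log\frac{2n(\tau^0)^2}{\epsilon}$ (the case $2n(\tau^0)^2<\epsilon$ being trivial, since then $k\ge1$ already suffices). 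Consequently every integer $k\ge\big\lceil\frac{1}{2\eta}\log\frac{2n(\tau^0)^2}{\epsilon}\big\rceil$ forces $(v^k)^Ts^k\le\epsilon$, which is the claim.

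I expect no genuine analytic obstacle: this is the standard short-step interior-point convergence-rate computation, and the only care needed is bookkeeping. Two points must be gotten right: (i) that the update $\tau\leftarrow(1-\eta)\tau$ precedes the Newton update, so $\tau^k=(1-\eta)^k\tau^0$ rather than $(1-\eta)^{k-1}\tau^0$; and (ii) that Lemma \ref{lemma_duality_gap} is legitimately applicable already at $k=1$, which is exactly where the initialization conditions $(1-\eta)\tau^0=1$ and $\lambda=1/\sqrt{n+1}$ of Lemma \ref{lemma_xi_condition} do their work. As a sanity check one may substitute $\eta=\frac{1}{4\sqrt{2n}}$ and $\tau^0=\frac{1}{1-\eta}$, whence $\tau^k=(1-\eta)^{k-1}$ and the sharp requirement $(2n)(1-\eta)^{2(k-1)}\le\epsilon$ reproduces the closed-form count $\mathcal N=\big\lceil\log(2n/\epsilon)\big/\big(-2\log(1-\tfrac{1}{4\sqrt{2n}})\big)\big\rceil+1$ announced in the abstract.
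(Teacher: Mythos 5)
Your proposal is correct and follows essentially the same route as the paper's own proof: apply Lemma \ref{lemma_duality_gap} with $\tau^k=(1-\eta)^k\tau^0$ to get $(v^k)^Ts^k\le 2n(1-\eta)^{2k}(\tau^0)^2$, take logarithms, and use $-\log(1-\eta)\ge\eta$ to obtain the stated ceiling bound. Your extra bookkeeping (invoking Lemma \ref{lemma_xi_condition} to justify that the hypothesis $\xi<1$ of Lemma \ref{lemma_duality_gap} holds at every iteration) is a point the paper leaves implicit, but it does not change the argument.
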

\begin{proof}
Let $\tau_k$ be the $k$th iterate of $\tau$, so $\tau_k=(1-\eta)^k\tau^0$. Applying Lemma \ref{lemma_duality_gap} gives that
\[
v_k^\top s^k\leq2n\tau_k^2=2n(1-\eta)^{2k}(\tau^0)^2.
\]
Hence $v_k^\top s_k\leq\epsilon$ holds if 
\[
2n(1-\eta)^{2k}(\tau^0)^2\leq\epsilon.
\]
Taking logarithms gives that
\begin{equation}\label{eqn_log}
 2k \log(1-\eta)+\log(2n(\tau^0)^2)\leq\log\epsilon, 
\end{equation}
which holds if
\[
k \geq \!\left\lceil \frac{\log(\frac{2n(\tau^0)^2}{\epsilon})}{-2\log(1-\eta)}\right\rceil.
\]
The proof is complete.
\end{proof}

\begin{theorem}\label{theorem_at_most}
Let $\eta=\frac{\sqrt{2}-1}{\sqrt{2n}+\sqrt{2}-1}$ and $\tau^0 =\frac{1}{1-\eta}$, Algorithm \ref{alg_fullNewton} requires at most
\begin{equation}\label{eqn_worst_number_iteragions}
N_{\operatorname{\max}}=\!\left\lceil\frac{\log(\frac{2n}{\epsilon})}{-2\log(\frac{\sqrt{2n}}{\sqrt{2n}+\sqrt{2}-1})}\right\rceil\! + 1
\end{equation}
iterations, which gives that $v^\top s\leq\epsilon$.
\end{theorem}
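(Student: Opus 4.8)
The plan is to derive Theorem \ref{theorem_at_most} as an immediate specialization of Lemma \ref{lemma_max_iters} to the parameter choices $\eta=\frac{1}{4\sqrt{2n}}$ and $\tau^0=\frac{1}{1-\eta}$. Before invoking that lemma, I would first confirm that all of its standing hypotheses are in force for Algorithm \ref{alg_fullNewton}: the initialization \eqref{eqn_initialization_stragegy} with $\lambda=\frac{1}{\sqrt{n+1}}$ lies in $\mathcal{F}^0$ (Remark \ref{remark_initialization_stragegy}); and, since $(1-\eta)\tau^0=1$ holds by construction, Lemma \ref{lemma_xi_condition} gives $\xi^0\le\frac{1}{\sqrt2}<1$ and, through Lemma \ref{lemma_xi}, shows that $\xi(\beta,\tau)\le\frac{1}{\sqrt2}$ is preserved at every iteration. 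This keeps the conclusions of Lemma \ref{lemma_strictly_feasible} (strict feasibility of each full Newton step) and Lemma \ref{lemma_duality_gap} (the bound $v_+^Ts_+\le 2n\tau^2$) valid along the entire run, so the iterates stay in $\mathcal{F}^0$ and the final duality gap bound can be applied.

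The core step is then a substitution into the iteration count $k\ge\left\lceil\frac{1}{2\eta}\log\frac{2n(\tau^0)^2}{\epsilon}\right\rceil$ supplied by Lemma \ref{lemma_max_iters}. With $\eta=\frac{1}{4\sqrt{2n}}$ we have $\frac{1}{2\eta}=2\sqrt{2n}$, and with $\tau^0=\frac{1}{1-\eta}$ we have $(\tau^0)^2=\bigl(1-\frac{1}{4\sqrt{2n}}\bigr)^{-2}$; inserting both turns the bound into $\left\lceil 2\sqrt{2n}\,\log\frac{2n}{\epsilon\left(1-\frac{1}{4\sqrt{2n}}\right)^2}\right\rceil=N_{\max}$. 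Since $\mathcal{N}$ in Algorithm \ref{alg_fullNewton} is chosen to be at least this large, after the loop $\tau=(1-\eta)^{\mathcal{N}}\tau^0$ and Lemma \ref{lemma_duality_gap} yields $v^Ts\le 2n\tau^2=2n(1-\eta)^{2\mathcal{N}}(\tau^0)^2\le\epsilon$, which is the asserted conclusion.

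It is worth also reconciling this count with the sharper value stated in the abstract and in Algorithm \ref{alg_fullNewton}: keeping the term $-2\log(1-\eta)$ instead of bounding it below by $2\eta$ in \eqref{eqn_log}, and using $2n(\tau^0)^2/\epsilon=(2n/\epsilon)(1-\eta)^{-2}$, the inequality $2k(-\log(1-\eta))\ge\log\frac{2n}{\epsilon}-2\log(1-\eta)$ rearranges to $k\ge\frac{\log(2n/\epsilon)}{-2\log(1-\eta)}+1$, giving the exact requirement $\left\lceil\frac{\log(2n/\epsilon)}{-2\log(1-\frac{1}{4\sqrt{2n}})}\right\rceil+1$; the coarser bound $N_{\max}$ in the theorem then follows from $-\log(1-\eta)\ge\eta$.

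I do not expect a genuine obstacle here, since the argument is essentially bookkeeping. The only points needing care are (i) the handling of the ceiling function so that the "$+1$" offset and the form of $N_{\max}$ are stated consistently, and (ii) verifying that the hypothesis $\xi\le\frac{1}{\sqrt2}$ — which hinges on both $\lambda=\frac{1}{\sqrt{n+1}}$ and $(1-\eta)\tau^0=1$ — really propagates through all $\mathcal{N}$ iterations via Lemma \ref{lemma_xi} rather than holding only at initialization, so that Lemmas \ref{lemma_strictly_feasible} and \ref{lemma_duality_gap} may be applied at the final iterate.
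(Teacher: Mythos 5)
Your proof is correct and follows essentially the same route as the paper: Theorem \ref{theorem_at_most} is just Lemma \ref{lemma_max_iters} specialized to $\eta=\frac{1}{4\sqrt{2n}}$ and $\tau^0=\frac{1}{1-\eta}$ (so $\frac{1}{2\eta}=2\sqrt{2n}$ and $(\tau^0)^2=(1-\eta)^{-2}$), with Lemmas \ref{lemma_xi_condition} and \ref{lemma_xi} guaranteeing that the proximity condition propagates so that Lemmas \ref{lemma_strictly_feasible} and \ref{lemma_duality_gap} apply throughout; your write-up is in fact more careful than the paper's one-line proof. Note that the substitution genuinely produces $\bigl(1-\tfrac{1}{4\sqrt{2n}}\bigr)^2$ in the denominator, as you write, so the $(1-4\sqrt{2n})^2$ printed in \eqref{eqn_worst_number_iteragions} is a typo that you have silently and correctly repaired.
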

\begin{proof}
By Lemmas \ref{lemma_duality_gap}--
\ref{lemma_max_iters}, let $\eta=\frac{\sqrt{2}-1}{\sqrt{2n}+\sqrt{2}-1}$, and $\tau^0=\frac{1}{1-\eta}$ to satisfy $(1-\eta)\tau^0=1$. Thus Algorithm \ref{alg_fullNewton} requires at most
\[
\begin{aligned}
N_{\operatorname{\max}}&=\!\left\lceil
\frac{\log(\frac{2n}{\epsilon})}{-2\log(1-\eta)}+ \frac{2\log(\tau^0)}{-2\log(1-\eta)}
\right\rceil\\
&=\!\left\lceil
\frac{\log(\frac{2n}{\epsilon})}{-2\log(1-\eta)}
\right\rceil\! + 1\\
&=\!\left\lceil
\frac{\log(\frac{2n}{\epsilon})}{-2\log(\frac{\sqrt{2n}}{\sqrt{2n}+\sqrt{2}-1})}
\right\rceil \! + 1
\end{aligned}
\]
iterations. The proof is complete.
\end{proof}

Unlike other methods whose iteration complexity analysis is conservative (its actual number of iterations is smaller than the maximum number of iterations, resulting in oversized control computing estimates), the worst-case computation analysis of our proposed Algorithm \ref{alg_fullNewton} is \textit{exact} and deterministic.
\subsection{Worst-case analysis is deterministic}
The worst-case iteration complexity analysis is based on the relationship between the duality gap $v^\top s$ and $(2n)\tau^2$, as shown in \eqref{eqn_beta_+}. In fact, the two are nearly equal to each other in the proposed algorithm framework, which indicates that the worst-case iteration analysis is deterministic with no conservativeness.
\begin{theorem}\label{theorem_exact}
    Let $\eta=\frac{\sqrt{2}-1}{\sqrt{2n}+\sqrt{2}-1}$ and $\tau^0=\frac{1}{1-\eta}$, Algorithm \ref{alg_fullNewton} exactly requires
    \begin{equation}
    \mathcal{N}=\!\left\lceil\frac{\log(\frac{2n}{\epsilon})}{-2\log(\frac{\sqrt{2n}}{\sqrt{2n}+\sqrt{2}-1})}\right\rceil\!+ 1
    \end{equation} 
    iterations, the resulting vectors have $v^\top s\leq\epsilon$.
\end{theorem}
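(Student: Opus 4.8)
The plan is to mirror the logarithmic computation already performed in Lemma \ref{lemma_max_iters}, but (i) to keep the exact quantity $-2\log(1-\eta)$ rather than the relaxation $-\log(1-\eta)\ge\eta$ that was used to obtain Theorem \ref{theorem_at_most}, and (ii) to substitute the particular parameters $\eta=\frac{1}{4\sqrt{2n}}$ and $\tau^0=\frac{1}{1-\eta}$ that Algorithm \ref{alg_fullNewton} prescribes. First I would confirm that the hypotheses of the supporting lemmas are in force: the choice $\tau^0=\frac{1}{1-\eta}$ gives $(1-\eta)\tau^0=1$, so together with $\lambda=\frac{1}{\sqrt{n+1}}$ we are exactly in the setting of Lemma \ref{lemma_xi_condition}; hence $\xi(\beta,\tau)\le\frac{1}{\sqrt2}<1$ holds before the first loop pass and is preserved after every pass by Lemma \ref{lemma_xi}. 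Consequently Lemma \ref{lemma_strictly_feasible} keeps every full Newton step inside $\mathcal F^0$, and Lemma \ref{lemma_duality_gap} is applicable at each iteration, giving $v^Ts\le 2n\tau^2$ after the step.

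Second, I would propagate $\tau$: after $k$ passes of the loop one has $\tau=(1-\eta)^k\tau^0$, so $v^Ts\le 2n(1-\eta)^{2k}(\tau^0)^2$. Imposing that the right-hand side be $\le\epsilon$ and taking logarithms yields $2k\bigl(-\log(1-\eta)\bigr)\ge\log\frac{2n}{\epsilon}+2\log\tau^0$. The one genuinely useful simplification is that $\tau^0=\frac{1}{1-\eta}$ makes $2\log\tau^0=-2\log(1-\eta)$, so dividing by $-2\log(1-\eta)>0$ collapses the inequality to $k\ge 1+\frac{\log(2n/\epsilon)}{-2\log(1-\eta)}$. The least integer $k$ satisfying this is $\left\lceil 1+\frac{\log(2n/\epsilon)}{-2\log(1-\eta)}\right\rceil=1+\left\lceil\frac{\log(2n/\epsilon)}{-2\log(1-\eta)}\right\rceil$, and substituting $\eta=\frac{1}{4\sqrt{2n}}$ produces precisely $\mathcal N$; the additive $+1$ is nothing but the contribution of the starting value $\tau^0=\frac{1}{1-\eta}$.

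Finally, I would address why the count is \emph{exact} rather than merely an upper bound. On the one hand, the for-loop in Algorithm \ref{alg_fullNewton} carries no data-dependent termination test, so it executes $\mathcal N$ passes on every instance and the iteration count equals $\mathcal N$ identically in the problem data. On the other hand, $\mathcal N$ is the smallest fixed count for which the certified bound $2n\tau^2$ reaches $\epsilon$, since $k=\mathcal N-1$ violates the displayed inequality; and from the discussion preceding the theorem, $v^Ts=(2n)\tau^2-\tfrac14\|q\|^2$ with $\tfrac14\|q\|^2\le\tfrac14\|p\|^2=\xi^2\tau^2\le\tfrac12\tau^2$, so $v^Ts$ and $(2n)\tau^2$ never disagree by more than a lower-order term and the bound carries essentially no slack. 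I expect this last point — pinning down the sense in which no strictly smaller fixed count would certify $v^Ts\le\epsilon$ for every instance — to be the only delicate part; the rest is the routine log-arithmetic of Lemma \ref{lemma_max_iters} specialized to the algorithm's parameters.
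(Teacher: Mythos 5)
Your proposal is correct and follows essentially the same route as the paper: both keep the exact quantity $-2\log(1-\eta)$ from the logarithmic inequality of Lemma \ref{lemma_max_iters}, use $\tau^0=\frac{1}{1-\eta}$ to turn $2\log\tau^0$ into the additive $+1$, and justify exactness via the near-tightness $v_+^Ts_+=2n\tau^2-\tfrac14\|q\|^2$ with $\tfrac14\|q\|^2\le\xi^2\tau^2$. Your added observation that the for-loop has no data-dependent termination test and therefore executes exactly $\mathcal N$ passes is, if anything, a cleaner justification of the word ``exactly'' than the paper's two-sided bound $\left(1-\frac{1}{4n}\right)2n\tau^2\le v_+^Ts_+\le 2n\tau^2$, which by itself does not rigorously exclude the gap crossing $\epsilon$ one iteration early.
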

\begin{proof}
Equations \eqref{eqn_xi} and \eqref{eqn_beta_+}) and $\xi\leq\frac{1}{\sqrt{2}}$ give that
\[
0\leq\tfrac{\|q\|^2}{4}\leq\tfrac{\|p\|^2}{4}=\xi^2\tau^2=\tfrac{\xi^2}{2n}(2n\tau^2)\leq\tfrac{1}{4n}(2n\tau^2),
\]
that is,
\[
(1-\tfrac{1}{4n})2n\tau_k^2\leq v_k^\top s_k\leq2n\tau_k^2.
\]
The larger the value of $n$, the tighter the bounds on $v_k^\top s_k$. The duality gap $v_k^\top s_k$ has nearly equal decreasing behavior with $\tau_k$; the number of iterations would be exact instead of worst-case. 
To prove it, consider the $(k-1)$th iteration, 
\[
(1-\tfrac{1}{4n})2n\tau_{k-1}^2\leq v_{k-1}^\top s_{k-1}\leq2n\tau_{k-1}^2.
\]
If $2n\tau_{k}^2\leq\left(1-\frac{1}{4n}\right)2n\tau_{k-1}^2$, then the duality gap will reach the convergence criterion no earlier than and no later than $2n\tau_k^2$. That is, we need to prove that
\[
(1-\eta)^2\leq1-\tfrac{1}{4n},\ \forall n=1,2,\cdots
\]
Substituting $\eta=\frac{\sqrt{2}-1}{\sqrt{2n}+\sqrt{2}-1}$ into the above, we need to prove that
\[
\tfrac{(\sqrt{2}-1)^2}{(\sqrt{2n}+\sqrt{2}-1)^2}-\tfrac{2(\sqrt{2}-1)}{\sqrt{2n}+\sqrt{2}-1}+\tfrac{1}{4n}\leq0, \ \forall n=1,2,\cdots
\]
which is equal to
\[
\begin{aligned}
    -&(4-2\sqrt{2})\sqrt{n}(n-1)-(8\sqrt{2}-10)n(\sqrt{n}-1)\\
    -&(3-2\sqrt{2})(n^{3/2}-1)-(19-12\sqrt{2})n^{3/2}\leq0
\end{aligned}
\]
which obviously always holds for $n=1,2,\cdots$.

From (\ref{eqn_log}), let $\eta=\frac{\sqrt{2}-1}{\sqrt{2n}+\sqrt{2}-1}$ and $\tau^0=\frac{1}{1-\eta}$; the required exact number of iterations is
\[
\mathcal{N}=\!\left\lceil\frac{\log(\frac{2n}{\epsilon})}{-2\log(\frac{\sqrt{2n}}{\sqrt{2n}+\sqrt{2}-1})}\right\rceil \! + 1
\]
The proof is complete.
\end{proof}

\subsection{Execution-time certificate}
Since our proposed Algorithm \ref{alg_fullNewton} is a full-Newton IPM algorithm without a line search procedure, each step of Algorithm \ref{alg_fullNewton} involves a clear and countable number of floating-point operations ([flops]). Our proposed Algorithm \ref{alg_fullNewton} has an exact number of iterations which is \textit{only dimension-dependent} (\textit{data-independent}), thus we can summarize the total [flops] required by Algorithm \ref{alg_fullNewton} as follows.
\begin{theorem}\label{theorem}
In Algorithm \ref{alg_fullNewton}, the initialization requires $(n) + (3) + (1+n^2) + (n) + (5n)+ (5)+ (2)$ [flops], Step 1 requires $2$ [flops], Step 2 requires $(\frac{1}{3}n^3+\frac{1}{2}n^2+\frac{1}{6}n)+2n^2+11n$ [flops], Step 3 requires $6n$ [flops], Step 4 requires $5n$ [flops]. Thus, Algorithm \ref{alg_fullNewton} totally requires $n^2+7n+11+\mathcal{N}(\frac{1}{3}n^3+\frac{5}{2}n^2+\frac{133}{6}n+2)$ [flops].
\end{theorem}

Then, based on the assumption that \textit{the adopted computation platform performs a fixed [flops] in constant time},
$$
\textrm{execution time} = \frac{\textrm{total [flops] required by the algorithm}}{\textrm{average [flops] processed per second}}~[s].
$$
Thus, by Theorem 3, Algorithm 1 can provide an execution-time certificate, which is \textit{only dimension-dependent} (\textit{data-independent}) making it competent for the execution-time certification of time-varying MPC problems (with possible time-varying costs or dynamics, such as RTI-based nonlinear MPC \cite{gros2020linear}).

\section{Numerical example}
In this section, Algorithm~\ref{alg_fullNewton} is implemented in MATLAB2023a via a C-mex interface, and the closed-loop simulation is performed on a contemporary MacBook Pro with 2.7~GHz 4-core Intel Core i7 processors and 16GB RAM. We test Algorithm \ref{alg_fullNewton} on an open-loop unstable AFTI-16 aircraft application. The aim is to validate whether the practical number of iterations is the same with the theoretical $\!\left\lceil\frac{\log(\frac{2n}{\epsilon})}{-2\log(\frac{\sqrt{2n}}{\sqrt{2n}+\sqrt{2}-1})}\right\rceil \!+ 1$, and whether the certified execution-time is smaller than the adopted sampling time. The open-loop unstable linearized AFTI-16 aircraft model reported in \cite{bemporad1997nonlinear} 
is 
\[
\left\{\begin{aligned}
\dot{x} =&{\footnotesize\left[\begin{array}{cccc}
-0.0151 & -60.5651 & 0 & -32.174 \\
-0.0001 & -1.3411 & 0.9929 & 0 \\
0.00018 & 43.2541 & -0.86939 & 0 \\
0 & 0 & 1 & 0
\end{array}\right]} x\\&+\!{\footnotesize\left[\begin{array}{cc}
-2.516 & -13.136 \\
-0.1689 & -0.2514 \\
-17.251 & -1.5766 \\
0 & 0
\end{array}\right] }u \\
y =&{\footnotesize\left[\begin{array}{llll}
0 & 1 & 0 & 0 \\
0 & 0 & 0 & 1
\end{array}\right]x}
\end{aligned}\right.
\]
We choose the sampling time as $\Delta t=0.05$~s, then the model is sampled using zero-order hold every $\Delta t=0.05$~s. The input constraints are $|u_i| \leq 25^{\circ},i = 1, 2$. The control goal is to make the pitch angle $y_2$ track a reference signal $r_2$. The cost matrices $W_y = \diag$([10,10]), $W_u = 0$, and $W_{\Delta u}= \diag$([0.1, 0.1]) are used in the MPC design. We investigate our proposed Algorithm \ref{alg_fullNewton} among different prediction horizon settings, $T=5,10,15,20$, which results in different problem dimensions $n=10,20,30,40$ as the dimension of control inputs is 2. We adopt the stopping convergence criteria $\epsilon=10^{-6}$. By Theorem \ref{theorem_exact}, the derived theoretical number of iterations is 
$\!\left\lceil\frac{\log(\frac{2n}{\epsilon})}{-2\log(\frac{\sqrt{2n}}{\sqrt{2n}+\sqrt{2}-1})}\right\rceil \!+ 1$, that is,
$
[96,\quad139,\quad173,\quad202] 
$
for different prediction horizon settings. By Theorem \ref{theorem}, before closed-loop simulations, we can exactly calculate the total [flops] of different prediction horizon settings, see Table \ref{tab1}.

In Table \ref{tab1}, the derived execution time in \textit{theory} that can be regarded as an execution-time certificate, is assumed to perform on 1 Gflop/s computing processor, and the execution time in \textit{practice} is obtained from a contemporary MacBook Pro with 2.7~GHz 4-core Intel Core i7 processors and 16GB RAM. Since 2.7~GHz 4-core Intel Core i7 processor cannot put all its computing power for this calculation (occupied by other PC's tasks), its execution time is still faster than a 1 Gflop/s computing processor from Table \ref{tab1}. The execution time in \textit{theory} (on 1 Gflop/s computing processor) among different prediction horizon settings are all smaller than the sampling time $\Delta t=50$ ms, which are execution-time certificates.

\begin{table}[!htbp]
\caption{Theoretical and practical computation performance of different prediction horizon $(T=5,10,15,20)$ settings}
\centering
\begin{tabular}{crlcrl}
\toprule
Problem  & \multicolumn{2}{c}{Number of iterations} &  Total& \multicolumn{2}{c}{Execution time {(ms)}}\\
dimension & \textit{theory}&\textit{practice} & [flops]$\times 10^{6}$  & \textit{theory}*&\textit{practice}
\\\midrule
 $n=10$ & 96&96 & 0.0777&0.0777 & 0.061\\
 $n=20$ & 139&139 & 0.5721&0.5721 & 0.450\\
 $n=30$ & 173&173 & 2.0628&2.0628 & 1.650\\
 $n=40$ & 202&202 & 5.9287&5.9287 & 4.210\\
\bottomrule
\end{tabular}
\begin{tablenotes}
    \footnotesize
    \item * The theoretical execution time is obtained by assuming running on 1 flop/s computing processor.
\end{tablenotes}
\label{tab1}
\end{table}

All closed-loop simulation results among different prediction horizon settings share almost the same closed-loop performance, as shown in Fig.\ \ref{Fig1}. The outputs $y_1$ and $y_2$ track the reference well while
the inputs $u_1$ and $u_2$ never go beyond $[-25,25]$. 
\begin{figure}[!ht]
        \hspace*{-1em}\includegraphics[width=1.1\columnwidth]{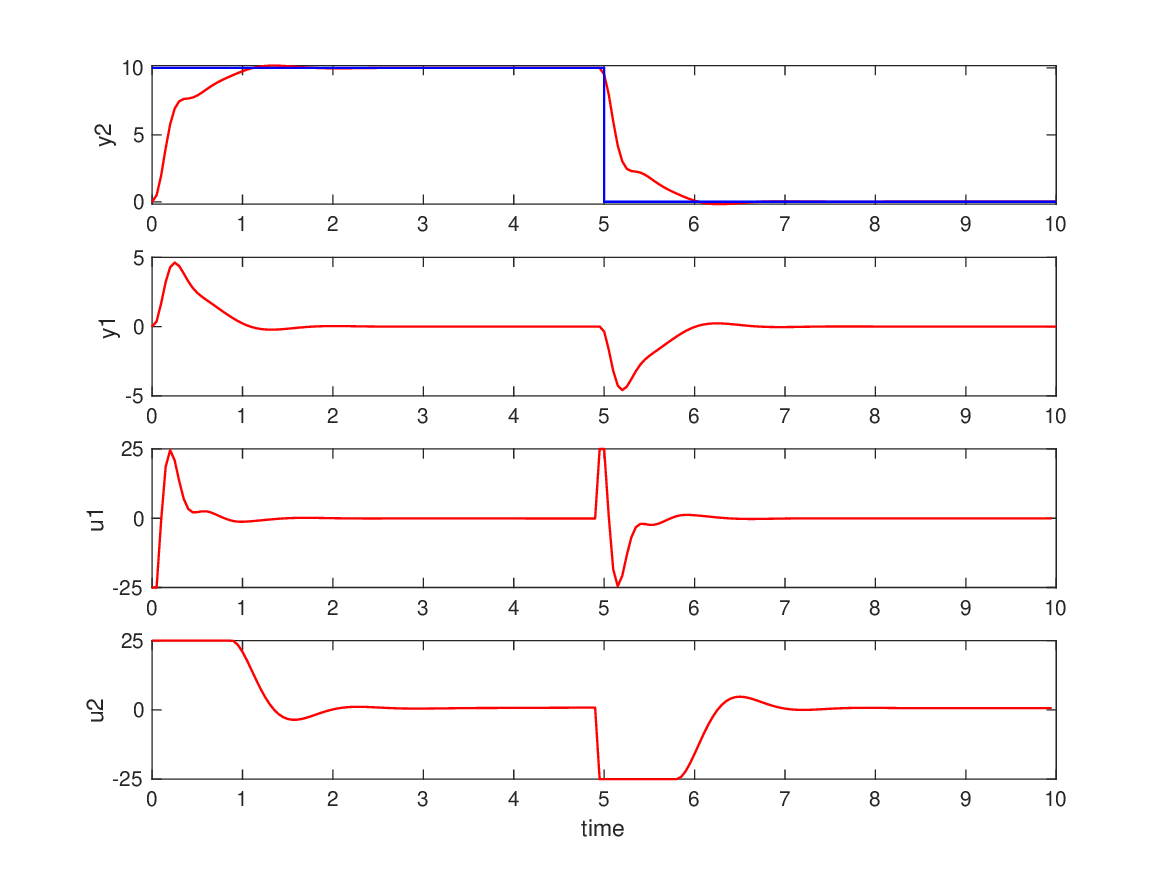} 
        \vspace{-1cm}
        \caption{Closed-loop performance of input-constrained MPC for AFTI-16 among different prediction horizons $(T=5,10,15,20)$.}
        \label{Fig1} 
\end{figure}

\section{Conclusion}
This technical note presents a \textit{direct} optimization algorithm with \textit{only dimension-dependent} (\textit{data-independent}), \textit{simple-calculated} and \textit{exact} 
\[\!\left\lceil\frac{\log(\frac{2n}{\epsilon})}{-2\log(\frac{\sqrt{2n}}{\sqrt{2n}+\sqrt{2}-1})}\right\rceil\!+ 1\] 
number of iterations for certifying the execution time of input-constrained linear MPC problems in real-time closed-loop. The computation complexity of our \textit{direct} optimization algorithm is \textit{only dimension-dependent} and \textit{simple-calculated} features, making it trivially certifying the execution time of nonlinear MPC problems via Koopman operator or RTI scheme, see our recent paper \cite{wu2024time} and \cite{wu2024execution}, respectively. This capability sets our algorithm apart from previous algorithms. One may argue that our algorithm is of very limited use since it targets input-constrained MPC problems. To dispel this concern, our recent paper \cite{wu2024execution_l1} extends our algorithm to encompass general MPC problems with input and state constraints.

Future endeavors will focus on speeding up our execution-time-certified algorithm (with smaller certified execution-time) and exploring practical applications in fast dynamical systems, such as robotics and electronics.

\section{Acknowledgement}
This research was supported by the U.S. Food and Drug Administration under the FDA BAA-22-00123 program, Award Number 75F40122C00200.

\bibliographystyle{IEEEtran}
\bibliography{ref} 

\begin{thebibliography}{10}
\providecommand{\url}[1]{#1}
\csname url@samestyle\endcsname
\providecommand{\newblock}{\relax}
\providecommand{\bibinfo}[2]{#2}
\providecommand{\BIBentrySTDinterwordspacing}{\spaceskip=0pt\relax}
\providecommand{\BIBentryALTinterwordstretchfactor}{4}
\providecommand{\BIBentryALTinterwordspacing}{\spaceskip=\fontdimen2\font plus
\BIBentryALTinterwordstretchfactor\fontdimen3\font minus \fontdimen4\font\relax}
\providecommand{\BIBforeignlanguage}[2]{{%
\expandafter\ifx\csname l@#1\endcsname\relax
\typeout{** WARNING: IEEEtran.bst: No hyphenation pattern has been}%
\typeout{** loaded for the language `#1'. Using the pattern for}%
\typeout{** the default language instead.}%
\else
\language=\csname l@#1\endcsname
\fi
#2}}
\providecommand{\BIBdecl}{\relax}
\BIBdecl

\bibitem{richter2011computational}
S.~Richter, C.~N. Jones, and M.~Morari, ``Computational complexity certification for real-time {MPC} with input constraints based on the fast gradient method,'' \emph{IEEE Transactions on Automatic Control}, vol.~57, no.~6, pp. 1391--1403, 2011.

\bibitem{giselsson2012execution}
P.~Giselsson, ``Execution time certification for gradient-based optimization in model predictive control,'' in \emph{51st IEEE Conference on Decision and Control}, 2012, pp. 3165--3170.

\bibitem{patrinos2013accelerated}
P.~Patrinos and A.~Bemporad, ``An accelerated dual gradient-projection algorithm for embedded linear model predictive control,'' \emph{IEEE Transactions on Automatic Control}, vol.~59, no.~1, pp. 18--33, 2013.

\bibitem{cimini2017exact}
G.~Cimini and A.~Bemporad, ``Exact complexity certification of active-set methods for quadratic programming,'' \emph{IEEE Transactions on Automatic Control}, vol.~62, no.~12, pp. 6094--6109, 2017.

\bibitem{arnstrom2019exact}
D.~Arnstr{\"o}m and D.~Axehill, ``Exact complexity certification of a standard primal active-set method for quadratic programming,'' in \emph{Proceedings of the 58th IEEE Conference on Decision and Control}, 2019, pp. 4317--4324.

\bibitem{cimini2019complexity}
G.~Cimini and A.~Bemporad, ``Complexity and convergence certification of a block principal pivoting method for box-constrained quadratic programs,'' \emph{Automatica}, vol. 100, pp. 29--37, 2019.

\bibitem{arnstrom2020complexity}
D.~Arnstr{\"o}m, A.~Bemporad, and D.~Axehill, ``Complexity certification of proximal-point methods for numerically stable quadratic programming,'' \emph{IEEE Control Systems Letters}, vol.~5, no.~4, pp. 1381--1386, 2020.

\bibitem{arnstrom2021unifying}
D.~Arnstr{\"o}m and D.~Axehill, ``A unifying complexity certification framework for active-set methods for convex quadratic programming,'' \emph{IEEE Transactions on Automatic Control}, vol.~67, no.~6, pp. 2758--2770, 2021.

\bibitem{okawa2021linear}
I.~Okawa and K.~Nonaka, ``Linear complementarity model predictive control with limited iterations for box-constrained problems,'' \emph{Automatica}, vol. 125, p. 109429, 2021.

\bibitem{forgione2020efficient}
M.~Forgione, D.~Piga, and A.~Bemporad, ``Efficient calibration of embedded {MPC},'' \emph{IFAC-PapersOnLine}, vol.~53, no.~2, pp. 5189--5194, 2020.

\bibitem{wang2009fast}
Y.~Wang and S.~Boyd, ``Fast model predictive control using online optimization,'' \emph{IEEE Transactions on Control Systems Technology}, vol.~18, no.~2, pp. 267--278, 2009.

\bibitem{ferreau2014qpoases}
H.~J. Ferreau, C.~Kirches, A.~Potschka, H.~G. Bock, and M.~Diehl, ``qp{OASES}: {A} parametric active-set algorithm for quadratic programming,'' \emph{Mathematical Programming Computation}, vol.~6, pp. 327--363, 2014.

\bibitem{stellato2020osqp}
B.~Stellato, G.~Banjac, P.~Goulart, A.~Bemporad, and S.~Boyd, ``{OSQP}: {A}n operator splitting solver for quadratic programs,'' \emph{Mathematical Programming Computation}, vol.~12, no.~4, pp. 637--672, 2020.

\bibitem{wu2023simple}
L.~Wu and A.~Bemporad, ``A {S}imple and {F}ast {C}oordinate-{D}escent {A}augmented-{L}agrangian {S}olver for {M}odel {P}redictive {C}ontrol,'' \emph{IEEE Transactions on Automatic Control}, vol.~68, no.~11, pp. 6860--6866, 2023.

\bibitem{wu2023construction}
------, ``A construction-free coordinate-descent augmented-{L}agrangian method for embedded linear {MPC} based on {ARX} models,'' \emph{IFAC-PapersOnLine}, vol.~56, no.~2, pp. 9423--9428, 2023.

\bibitem{gros2020linear}
S.~Gros, M.~Zanon, R.~Quirynen, A.~Bemporad, and M.~Diehl, ``From linear to nonlinear {MPC}: bridging the gap via the real-time iteration,'' \emph{International Journal of Control}, vol.~93, no.~1, pp. 62--80, 2020.

\bibitem{klee1972good}
V.~Klee and G.~Minty, ``How good is the simplex algorithm,'' \emph{Inequalities}, vol.~3, no.~3, pp. 159--175, 1972.

\bibitem{rao1998application}
C.~V. Rao, S.~J. Wright, and J.~B. Rawlings, ``Application of interior-point methods to model predictive control,'' \emph{Journal of Optimization Theory and Applications}, vol.~99, pp. 723--757, 1998.

\bibitem{zanelli2020forces}
A.~Zanelli, A.~Domahidi, J.~Jerez, and M.~Morari, ``{FORCES NLP}: an efficient implementation of interior-point methods for multistage nonlinear nonconvex programs,'' \emph{International Journal of Control}, vol.~93, no.~1, pp. 13--29, 2020.

\bibitem{mehrotra1992implementation}
S.~Mehrotra, ``On the implementation of a primal-dual interior point method,'' \emph{SIAM Journal on optimization}, vol.~2, no.~4, pp. 575--601, 1992.

\bibitem{nocedal2006numerical}
J.~Nocedal and S.~Wright, \emph{Numerical optimization}.\hskip 1em plus 0.5em minus 0.4em\relax Springer, 2006.

\bibitem{cartis2009some}
C.~Cartis, ``Some disadvantages of a {M}ehrotra-type primal-dual corrector interior point algorithm for linear programming,'' \emph{Applied Numerical Mathematics}, vol.~59, no.~5, pp. 1110--1119, 2009.

\bibitem{wright1997primal}
S.~J. Wright, \emph{Primal-dual interior-point methods}.\hskip 1em plus 0.5em minus 0.4em\relax SIAM, 1997.

\bibitem{renegar2001mathematical}
J.~Renegar, \emph{A Mathematical View of Interior-point Methods in Convex Optimization}.\hskip 1em plus 0.5em minus 0.4em\relax Philadelphia: SIAM, 2001.

\bibitem{ye2011interior}
Y.~Ye, \emph{Interior Point Algorithms: Theory and Analysis}.\hskip 1em plus 0.5em minus 0.4em\relax New York: John Wiley \& Sons, 2011.

\bibitem{darvay2002weighted}
Z.~Darvay, ``A weighted-path-following method for linear optimization,'' \emph{Studia Universitatis Babes-Bolyai, Series Informatica}, vol.~47, no.~1, pp. 3--12, 2002.

\bibitem{achache2006new}
M.~Achache, ``A new primal-dual path-following method for convex quadratic programming,'' \emph{Computational \& Applied Mathematics}, vol.~25, pp. 97--110, 2006.

\bibitem{boudjellal2023new}
N.~Boudjellal and D.~Benterki, ``A new full-{N}ewton step feasible interior point method for convex quadratic programming,'' \emph{Optimization}, pp. 1--18, 2023.

\bibitem{achache2015full}
M.~Achache and R.~Khebchache, ``A full-{N}ewton step feasible weighted primal-dual interior point algorithm for monotone {LCP},'' \emph{Afrika Matematika}, vol.~26, pp. 139--151, 2015.

\bibitem{boyd2004convex}
S.~P. Boyd and L.~Vandenberghe, \emph{Convex optimization}.\hskip 1em plus 0.5em minus 0.4em\relax Cambridge University Press, 2004.

\bibitem{bemporad1997nonlinear}
A.~Bemporad, A.~Casavola, and E.~Mosca, ``Nonlinear control of constrained linear systems via predictive reference management,'' \emph{IEEE Transactions on Automatic Control}, vol.~42, no.~3, pp. 340--349, 1997.

\bibitem{wu2024time}
L.~Wu, K.~Ganko, and R.~D. Braatz, ``Time-certified input-constrained {NMPC} via {K}oopman {O}perator,'' \emph{arXiv preprint arXiv:2401.04653}, 2024.

\bibitem{wu2024execution}
L.~Wu, K.~Ganko, S.~Wang, and R.~D. Braatz, ``An {E}xecution-time-certified {R}iccati-based {IPM} {A}lgorithm for {RTI}-based {I}nput-constrained {NMPC},'' \emph{arXiv preprint arXiv:2402.16186}, 2024.

\bibitem{wu2024execution_l1}
L.~Wu and R.~D. Braatz, ``An {E}xecution-time-certified {QP} {A}lgorithm for $\ell_1$ penalty-based {S}oft-constrained {MPC},'' \emph{arXiv preprint arXiv:2403.18235}, 2024.

\end{thebibliography}
\end{document}